\theoremstyle{plain}
\newtheorem{Thm}{Theorem}
\newtheorem{Coro}[Thm]{Corollary}
\newtheorem{Lem}[Thm]{Lemma}
\theoremstyle{definition}
\newtheorem{Def}[Thm]{Definition}
\begin{document}

\title{Flipping and stabilizing Heegaard splittings}

\author{Jesse Johnson}
\address{\hskip-\parindent
        Department of Mathematics \\
        Yale University \\
        PO Box 208283 \\
        New Haven, CT 06520 \\
        USA}
\email{jessee.johnson@yale.edu}

\subjclass{Primary 57M}
\keywords{Heegaard splitting, stabilization problem}

\thanks{Research supported by NSF MSPRF grant 0602368}

\begin{abstract}
We show that the number of stabilizations needed to interchange the handlebodies of a Heegaard splitting of a closed 3-manifold by an isotopy is bounded below by the smaller of twice its genus or half its Hempel distance.  This is a combinatorial version of a proof by Hass, Thompson and Thurston of a similar theorem, but with an explicit bound in terms of distance.  We also show that in a 3-manifold with boundary, the stable genus of a Heegaard splitting and a boundary stabilization of itself is bounded below by the same value.
\end{abstract}

\maketitle

\section{Introduction}

A \textit{Heegaard splitting} for a compact, connected, closed, orientable 3-manifold $M$ is a triple $(\Sigma, H^-, H^+)$ where $\Sigma$ is a compact, separating surface in $M$ and $H^-$, $H^+$ are handlebodies in $M$ such that $M = H^- \cup H^+$ and $\partial H^- = \Sigma = H^- \cap H^+ = \partial H^+$.  A \textit{stabilization} of $(\Sigma, H^-, H^+)$ is a new Heegaard splitting constructed by taking a connect sum of $(\Sigma, H^-, H^+)$ with a Heegaard splitting for $S^3$.  (This will be described more carefully later.)

A Heegaard splitting is \textit{flippable} if there is an isotopy of $M$ that takes $\Sigma$ to itself but interchanges the two handlebodies or, equivalently, if there is an isotopy taking the oriented surface $\Sigma$ to itself with the opposite orientation.  Whether or not a Heegaard splitting is flippable, it will always have a stabilization that is flippable.  The \textit{flip genus} of a Heegaard splitting is the genus of the smallest stabilization that is flippable.  

\begin{Thm}
\label{mainthm1}
Given a genus $k \geq 2$ Heegaard splitting $(\Sigma, H^-, H^+)$ for a closed 3-manifold $M$, the flip genus of $\Sigma$ is greater than or equal to $\min\{2k, \frac{1}{2} d(\Sigma)\}$.
\end{Thm}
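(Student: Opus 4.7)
Suppose $(\Sigma',W^-,W^+)$ is a flippable stabilization of $(\Sigma,H^-,H^+)$ of genus $g$. If $g\ge 2k$ the bound is trivial, so assume $g<2k$; the goal is to produce a path in $\mathcal{C}(\Sigma)$ of length at most $2g$ from $\mathcal{D}(H^-)$ to $\mathcal{D}(H^+)$, which yields $d(\Sigma)\le 2g$. Let $\phi_t:M\to M$, $t\in[0,1]$, be the flipping isotopy, so $\phi_0=\mathrm{id}$ and $\phi_1$ interchanges $W^-$ and $W^+$. Write $\Sigma'_t=\phi_t(\Sigma')$ with cobounding handlebodies $W^-_t,W^+_t$, chosen so that $W^-_0=W^-$ and $W^-_1=W^+$.

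\textbf{Generic position.} I would perturb $\Sigma$ so that it meets $\Sigma'_t$ transversely outside a finite set of exceptional times $t_1<\dots<t_n$, each of which is a saddle tangency. On each open interval between consecutive tangencies, the intersection $\Sigma\cap\Sigma'_t$ is a disjoint union of simple closed curves that vary by ambient isotopy on both surfaces; passage through a tangency changes this collection by a single band move.

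\textbf{Extracting disk curves.} For each generic $t$ and each side $\epsilon\in\{-,+\}$, the piece $\Sigma'_t\cap H^\epsilon$ is a compact surface with boundary on $\Sigma$. Using that $\Sigma'_t$ is a genus-$g$ Heegaard surface for $M$ and that $g<2k$, an innermost disk / outermost arc analysis of compressing disks for $W^\epsilon_t$ contained in $\Sigma'_t\cap H^\epsilon$ should produce a curve of $\Sigma\cap\Sigma'_t$ bounding an essential disk in $H^\epsilon$. Choosing $\Sigma'$ to be a stabilization of $\Sigma$ localized in a small ball, I can arrange at $t=0$ that this procedure outputs a curve $\alpha_0\in\mathcal{D}(H^-)$; because $W^-_1=W^+$, the same recipe at $t=1$ produces $\alpha_1\in\mathcal{D}(H^+)$.

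\textbf{Counting transitions; the main obstacle.} The curves $\alpha_t$ are constant up to isotopy on $\Sigma$ between tangencies, and at each tangency the band move shifts $\alpha_t$ by a bounded amount in $\mathcal{C}(\Sigma)$. The crux of the argument, which I expect to be the main obstacle, is to show that at most $2g$ tangencies actually alter the homotopy class of $\alpha_t$ in $\mathcal{C}(\Sigma)$. This is the combinatorial substitute for the area estimate in Hass--Thompson--Thurston: the flipping isotopy may be viewed as a sweepout of $\Sigma'\times[0,1]$ whose critical structure is controlled by a handle decomposition of complexity $2g$, and only these essential events contribute to the walk. Assembling the pieces gives $d_{\mathcal{C}(\Sigma)}(\alpha_0,\alpha_1)\le 2g$, and hence $d(\Sigma)\le 2g$, completing the proof.
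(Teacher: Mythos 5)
Your proposal attempts a genuinely different route from the paper: you track a single one-parameter family $\Sigma'_t = \phi_t(\Sigma')$ directly through the flipping isotopy, while the paper works with a two-parameter family, comparing sweep-outs $f$ for $\Sigma$ and $g$ for $\Sigma'$ via the Rubinstein--Scharlemann graphic and then isotoping $g$ through a family of sweep-outs. That two-parameter setup is what gives the argument its teeth, and the places where your one-parameter approach is thin are exactly the places where the extra dimension does the work.

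There are two genuine gaps. First, the ``extracting disk curves'' step does not go through as stated: there is no reason that $\Sigma \cap \Sigma'_t$ contains a curve bounding a compressing disk for $H^-$ or $H^+$ at a generic time $t$. An innermost-disk / outermost-arc argument on the pieces $\Sigma'_t \cap H^\pm$ simplifies the picture but does not manufacture a disk for $H^\pm$ out of nothing; in general $\Sigma'_t$ can intersect $\Sigma$ only in curves that are essential in $\Sigma$ but bound in neither handlebody. The paper sidesteps this by never trying to produce disk curves at an arbitrary time. Instead, it establishes a dichotomy for a generic product $f \times g$: either $g$ \emph{spans} $f$, in which case $\Sigma'$ amalgamates along two parallel copies of $\Sigma$ and hence $k' \ge 2k$ (Lemma~\ref{bothfacinglem}), or $g$ \emph{splits} $f$, in which case one fixed level surface $\Sigma'_s$ of $g$ restricts to a Morse function of $f$ whose every level set carries a loop essential in the corresponding $\Sigma_t$ (Lemma~\ref{nicesmorselem}). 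Disk curves for $H^\pm$ appear only at the two ends of the sweep-out of $f$, not at intermediate times, and the path between them is built from those essential level loops.

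Second, the ``counting transitions'' step is the crux, as you say, and it is not established. Your heuristic that ``the flipping isotopy may be viewed as a sweepout of $\Sigma'\times[0,1]$ whose critical structure is controlled by a handle decomposition of complexity $2g$'' does not translate into a bound on how many tangencies change the isotopy class of $\alpha_t$ in $\mathcal{C}(\Sigma)$: the isotopy lives in $M$, not in $\Sigma'\times[0,1]$, and there is no a priori bound on the number of saddle tangencies between $\Sigma$ and $\Sigma'_t$. The paper obtains the bound $2k'-2$ by an entirely different device: it takes the one-parameter family of level sets of $f|_{\Sigma'_s}$, observes they cut $\Sigma'_s$ into at most $-\chi(\Sigma') = 2k'-2$ pairs of pants, and notes that only a bounded number of pair-of-pants transitions can appear in a geodesic path (Lemma~\ref{lconnectedlem}). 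The Euler-characteristic count is applied to a static surface cut by level curves, not to events along an ambient isotopy. Your proposal would need a genuinely new mechanism to bound the number of essential tangency events; no such mechanism is offered, and it is not clear one exists for the flipping isotopy directly. Finally, where your argument does eventually need the flip, the paper uses it indirectly: since $\Sigma$ spans itself positively (Lemma~\ref{withitselflem}) and stabilizations preserve spanning (Lemma~\ref{stabfacinglem}), a flippable stabilization spans $\Sigma$ both positively and negatively; an isotopy of sweep-outs connecting the two then must, at some generic time, either span both ways or split (Lemmas~\ref{isotopesweepslem} and~\ref{sigmabothlem}). That discrete argument about how the graphic changes replaces the continuity argument you gesture at.
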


Here, $d(\Sigma)$ is defined as follows: The \textit{curve complex} $C(\Sigma)$ of a compact surface $\Sigma$ is a simplicial complex whose vertices are isotopy classes of essential simple closed curves in $\Sigma$ and whose simplices are pairwise disjoint sets of loops.  The distance $d(\ell^-,\ell^+)$ between simple closed curves $\ell^-,\ell^+$ in $\Sigma$ is defined as the length of the shortest edge path in $C(\Sigma)$ between the vertices that represent them.  The \textit{(Hempel) distance} $d(\Sigma)$ of a Heegaard splitting $(\Sigma, H^-, H^+)$ is the minimum of $d(\ell^-, \ell^+)$ over all pairs such that $\ell^-$ bounds a disk in $H^-$ and $\ell^+$ bounds a disk in $H^+$.

Hass, Thompson and Thurston~\cite{htt:stabs} recently proved that there exist Heegaard splittings with flip genus equal to $2k$.  (A straightforward construction shows that the flip genus is never more than $2k$.)  They construct examples by gluing the handlebodies together by a high power of a pseudo-Anosov map.  Theorem~\ref{mainthm1} implies their result because, as proved by Hempel~\cite{Hempel:complex}, such Heegaard splittings will have distance greater than $4k$.

The \textit{stable genus} of two Heegaard splittings $(\Sigma, H^-, H^+)$ and $(\Sigma', H'^-, H'^+)$ is the genus of the smallest stabilization of $\Sigma$ that is isotopic to a stabilization of $\Sigma'$.  For this definition, we will not pay attention to the names of the two handlebodies or the orientations of the surfaces.  We just want to calculate when the stabilizations will be isotopic as unoriented surfaces.  In this case, the methods used to prove Theorem~\ref{mainthm1} cannot be used to bound stable genus in closed manifolds.  However, they can be used for 3-manifolds with boundary.   (Heegaard splittings for manifolds with non-empty boundary will be defined in a later section.)

\begin{Thm}
\label{mainthm2}
Let $\Sigma$ be a genus $k \geq 2$ Heegaard splitting of a 3-manifold with a single boundary component and let $\Sigma'$ be the result of boundary stabilizing $\Sigma$.   Then the stable genus of $\Sigma$ and $\Sigma'$ is greater than or equal to $\min \{ 2k, \frac{1}{2} d(\Sigma)\}$.  
\end{Thm}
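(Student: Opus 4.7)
The plan is to closely imitate the proof of Theorem~\ref{mainthm1}. The key point is that, from the point of view of the curve complex of $\Sigma$, a boundary stabilization behaves like a flip: it interchanges the two sides of the splitting at the cost of raising the genus by one. Consequently a common stabilization of $\Sigma$ and $\Sigma'$ should play the role of a flipping stabilization, and the same curve-complex argument should yield the same lower bound.

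First I would unpack the definition of boundary stabilization. Writing $\Sigma$ as the splitting surface between a handlebody $H^-$ and a compression body $H^+$ whose negative boundary is $\partial M$, the new surface $\Sigma'$ is obtained by tubing $\Sigma$ to a copy of $\partial M$ pushed slightly into $H^+$. Viewed as an unoriented Heegaard splitting, one side of $\Sigma'$ is a compression body with negative boundary $\partial M$, but this compression body lies on the \emph{opposite} side of $\Sigma'$ from the compression body of $\Sigma$ that met $\partial M$. In this precise sense $\Sigma$ and $\Sigma'$ are already ``flipped'' relative to one another near $\partial M$.

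Next, suppose $\tilde\Sigma$ is a common stabilization of $\Sigma$ and $\Sigma'$ of genus $g$. Via the $\Sigma$-side identification, meridian disks of $H^-$ lift to disk-bounding curves on one side of $\tilde\Sigma$; via the $\Sigma'$-side identification, combined with the swap above, the same curves are realized as disk-bounding curves on the \emph{other} side of $\tilde\Sigma$. Feeding the isotopy that realizes the common stabilization into the sweepout machinery used in the proof of Theorem~\ref{mainthm1} should produce a path in $C(\Sigma)$ of length controlled by $g$ that joins a curve bounding a disk in $H^-$ to a curve bounding a disk in $H^+$. Comparing with the definition of Hempel distance yields $g \geq \frac{1}{2} d(\Sigma)$, and the same fallback from the proof of Theorem~\ref{mainthm1} handles the case where this path estimate is obstructed, producing $g \geq 2k$ instead; either way $g \geq \min\{2k,\frac{1}{2} d(\Sigma)\}$.

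The main technical obstacle is the ``swap'' bookkeeping: verifying rigorously that boundary stabilization reverses the two sides with respect to disks inherited from $\Sigma$, and propagating this through an arbitrary common stabilization. Closely related is extending the sweepout and curve-complex path estimates from the closed setting of Theorem~\ref{mainthm1} to manifolds with boundary, where one of the compression bodies is not a handlebody. I expect the core geometric inequalities to carry over, but the combinatorics of how meridian disks evolve under the isotopy is delicate enough that this should be where most of the real work of the proof lies.
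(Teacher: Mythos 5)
Your proposal follows essentially the same route as the paper's proof. The key insight --- that boundary stabilization interchanges the two sides of the splitting, so a common stabilization of $\Sigma$ and $\Sigma'$ plays the role of a flipping stabilization --- is exactly how the paper proceeds: Lemma~\ref{bstabfacinglem} shows boundary stabilization reverses the sign of spanning, Lemma~\ref{stabfacinglem} propagates this through arbitrary further stabilizations, and Lemma~\ref{sigmabothlem} (the workhorse behind Theorem~\ref{mainthm1}) then delivers the bound once the common stabilization is shown to span $(\Sigma, H^-, H^+)$ with both signs.
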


Moriah and Sedgwick~\cite{morsedg} have asked whether there is either a closed 3-manifold or a 3-manifold with a single boundary component that has a weakly reducible Heegaard splitting that is not minimal genus.  Examples are known for more boundary components.  They have suggested boundary stabilization (which always produces weakly reducible Heegaard splittings) as a possible way to construct examples with one boundary component.  Theorem~\ref{mainthm2} implies the following:

\begin{Coro}
\label{maincoro}
If a 3-manifold $M$ with a single genus $b$ boundary component has a genus $k \geq 2$ Heegaard surface $\Sigma$ such that $d(\Sigma) > 2(k + b)$ then a boundary stabilization of $\Sigma$ is an irreducible, weakly reducible Heegaard splitting of non-minimal genus.
\end{Coro}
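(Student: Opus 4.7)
The plan is to verify the three properties of $\Sigma'$ in turn: weak reducibility, non-minimal genus, and irreducibility.  The first two are immediate.  Weak reducibility follows from the construction of boundary stabilization: the meridian of the tube used in that construction, together with a compressing disk of the remaining collar of $\partial M$, gives a pair of disks in opposite compression bodies of $\Sigma'$ with disjoint boundaries on $\Sigma'$.  Non-minimality is trivial since $g(\Sigma') = k + b > k = g(\Sigma)$, so $\Sigma$ itself is a Heegaard surface of smaller genus.

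Irreducibility, which is the real content, I would prove by contradiction.  Suppose $\Sigma'$ is reducible.  Then $\Sigma'$ is the ordinary stabilization of some Heegaard splitting $\Sigma''$ of $M$ with $g(\Sigma'') = k + b - 1$.  Since stabilizations at a fixed ambient genus are unique up to isotopy, the genus-$n$ stabilizations of $\Sigma'$ and of $\Sigma''$ coincide once $n \geq k + b$, so any common stabilization of $\Sigma$ and $\Sigma'$ at such a genus is equally a common stabilization of $\Sigma$ and $\Sigma''$.  Because $g(\Sigma'') = k + b - 1 < \frac{1}{2} d(\Sigma)$, a Scharlemann--Tomova-style comparison---any Heegaard splitting of $M$ of genus less than $\frac{1}{2} d(\Sigma)$ must be a stabilization of $\Sigma$---forces $\Sigma'' = \mathrm{stab}^{b-1}(\Sigma)$, and hence $\Sigma' = \mathrm{stab}^{b}(\Sigma)$.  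In particular $\Sigma'$ is itself a common stabilization of $\Sigma$ and $\Sigma'$ at genus $k + b$, so the stable genus of the pair is exactly $k + b$.

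Feeding this into Theorem~\ref{mainthm2} gives $k + b \geq \min\{2k, \frac{1}{2} d(\Sigma)\}$.  The hypothesis $d(\Sigma) > 2(k + b)$ eliminates the second term of the minimum, leaving $b \geq k$.  But $d(\Sigma)$ is defined only when the compression body $H^-$ admits a compressing disk, which fails precisely when $H^- \cong \partial M \times I$, that is, when $k = b$; so the hypothesis already forces $k > b$, contradicting $b \geq k$ and completing the proof.  The main obstacle in this plan is the Scharlemann--Tomova-style step used to pin down $\Sigma''$ as a stabilization of $\Sigma$: this lies outside the stable genus machinery of Theorem~\ref{mainthm2} itself, and a fully self-contained derivation would have to either reprove it or control the destabilization of $\Sigma'$ directly through the structure of the boundary stabilization.
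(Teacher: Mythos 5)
Your proof is correct and follows essentially the same route as the paper: weak reducibility and non-minimality are immediate, and irreducibility is derived by assuming $\Sigma'$ destabilizes to some lower-genus $\Sigma''$, invoking a Scharlemann--Tomova-type bound to conclude $\Sigma''$ (hence $\Sigma'$) is a stabilization of $\Sigma$, and then contradicting Theorem~\ref{mainthm2}.  Two small remarks.  First, the ``obstacle'' you flag at the end is not actually an obstacle here: the paper re-derives exactly the needed Scharlemann--Tomova statement as Corollary~\ref{stcoro}, using only Lemmas~\ref{bothfacinglem} and~\ref{neitherfacinglem}, so the step is self-contained; you do need the full form of that corollary (a stabilization of $\Sigma$ \emph{or} of a boundary stabilization of $\Sigma$), and then rule out the second alternative by the genus count $g(\Sigma'') < k+b$, as the paper does.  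Second, your observation that the hypothesis forces $k > b$ --- so that $2k > k+b$ strictly and the contradiction actually closes --- is a genuine refinement; the paper's phrase ``strictly greater than $k+b$'' tacitly uses this.  (Watch the compression-body labels: under the paper's convention $H^-$ is the handlebody and $H^+$ the compression body meeting $\partial M$, so the degenerate case is $H^+ \cong \partial M \times I$, not $H^-$.)
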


A future paper will deal with the problem of bounding from below the stable genus of Heegaard splittings in a closed 3-manifold by generalizing the methods discussed here.

I would like to thank Joel Hass and Abby Thompson for discussing their proof with me during the AIM workshop on Heegaard splittings, triangulations and hyperbolic geometry, held in December 2007, and Andrew Casson for helping me to work out the details of the proof below.

\section{Sweep-outs and graphics}
\label{sweepsect}

A \textit{handlebody} is a connected 3-manifold that is homeomorphic to a regular neighborhood of a graph embedded in $S^3$.  Given a properly embedded graph $K$ in a 3-manifold $M$ with boundary (i.e. one or more of the vertices may be in the boundary of $M$, but the interiors of the edges are in the interior of $M$), a \textit{compression body} is a connected 3-manifold homeomorphic to a regular neighborhood $H$ of the union of $K$ and every boundary component that contains a vertex of $K$.  The union of $K$ and the boundary components is called a \textit{spine} for $H$.  Note that a handlebody is also a compression body.

The subset $\partial H \cap \partial M$ of $\partial H$ is called the \textit{negative boundary}, written $\partial_- H$, and the remaining component is the \textit{positive boundary}, $\partial_+ H$.  For a 3-manifold $M$ with boundary, a Heegaard splitting for $M$ is a triple $(\Sigma, H^-, H^+)$ where $\Sigma \subset M$ is a closed, embedded surface, $H^-, H^+ \subset M$ are compression bodies, $H^- \cup H^+ = M$, and $\partial_+ H^- = \Sigma = \partial_+ H^+ = H^- \cap H^+$.  It follows that $\partial M = \partial_- H^- \cup \partial_- H^+$.

A \textit{sweep-out} is a smooth function $f : M \rightarrow [-1,1]$ such that for each $x \in (-1,1)$, the level set $f^{-1}(x)$ is a closed surface.  Moreover, $f^{-1}(-1)$ must be the union of a graph and some number of boundary components while $f^{-1}(1)$ is the union of a second graph and the remaining boundary components.  Each of $f^{-1}(-1)$ and $f^{-1}(1)$ is called a \textit{spine} of the sweep-out.  Each level surface of $f$ is a Heegaard surface for $M$.  The spines of the sweep-outs are spines of the two compression bodies in the Heegaard splitting.  

Conversely, given a Heegaard splitting $(\Sigma, H^-, H^+)$ for $M$, there is a sweep-out for $M$ such that each level surface is isotopic to $\Sigma$.  We will say that a sweep-out \textit{represents} $(\Sigma, H^-, H^+)$ if $f^{-1}(-1)$ is isotopic to a spine for $H^-$ and $f^{-1}(1)$ is isotopic to a spine for $H^+$.  The level surfaces of such a sweep-out will be isotopic to $\Sigma$.  A simple construction (which will be left to the reader) implies the following:

\begin{Lem}
Every Heegaard splitting of a compact, connected, orientable, smooth 3-manifold is represented by a sweep-out.
\end{Lem}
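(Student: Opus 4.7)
The plan is to build $f$ separately on each compression body and glue along $\Sigma$, exploiting the product structure that $H^\pm$ carries away from a chosen spine. Choose smooth spines $K^-$ for $H^-$ and $K^+$ for $H^+$. Since each compression body is, by definition, a regular neighborhood of its spine, there exist diffeomorphisms
\[
\varphi^\pm : \Sigma \times (0,1] \to H^\pm \setminus K^\pm
\]
with $\varphi^\pm(\cdot, 1) = \mathrm{id}_\Sigma$. Before doing anything else, fix a smooth bicollar $\Sigma \times (-\epsilon, \epsilon) \hookrightarrow M$ of $\Sigma$ inside $M$ and, using the uniqueness of collars up to isotopy, arrange $\varphi^-$ and $\varphi^+$ so that $\varphi^-(p,t)$ and $\varphi^+(p, 2-t)$ agree with this bicollar for $t$ near $1$.

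Next I would define $f$. On $H^- \setminus K^-$ set $f(\varphi^-(p,t)) = t - 1$, on $H^+ \setminus K^+$ set $f(\varphi^+(p,t)) = 1 - t$, and extend by $f \equiv -1$ on $K^-$ and $f \equiv 1$ on $K^+$. Continuity is immediate, smoothness on $M \setminus (K^- \cup K^+)$ is built into the product coordinates, and smoothness across $\Sigma = f^{-1}(0)$ follows from the bicollar compatibility. The level sets in $(-1,1)$ are, by construction, parametrized copies of $\Sigma$, so the conditions on the spines at $\pm 1$ and on the isotopy class of the generic level surface are satisfied.

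\emph{Main obstacle.} The one nontrivial point is smoothness along $K^\pm$, because the naive formula $t - 1$ is not even continuously differentiable along a graph: the coordinate $t$ degenerates as one approaches the spine. The fix is to replace $f$ on a regular neighborhood of each spine by a local model built on the standard regular neighborhood of a graph embedded in $\mathbb{R}^3$ (together with collars of the negative boundary components). On this model, construct a smooth function taking the value $-1$ on the graph and some constant $c \in (-1, 0)$ on the frontier of the neighborhood, and which transversely behaves like a smooth function of the squared distance from the graph; such a function exists because a graph is $1$-dimensional and its tubular neighborhood is a union of $D^2$-bundles over edges glued over balls at the vertices, on each of which a squared-distance function is smooth. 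Patching this local model into $f$ with a bump function in the $t$-direction produces a smooth sweep-out representing $(\Sigma, H^-, H^+)$, with $f^{-1}(\pm 1) = K^\pm$ as required. The analogous construction in the $H^+$ factor finishes the proof.
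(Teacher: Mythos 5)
The paper itself gives no proof of this lemma---it explicitly says ``A simple construction (which will be left to the reader) implies the following''---so there is no argument in the paper to compare against. Your proof is a correct and natural filling-in of that gap: you use the product structure on each compression body minus a spine to define $f$ fiberwise, match the two pieces along $\Sigma$ via a bicollar, and then correctly identify the one real technical point, namely that the radial coordinate $t$ is not a smooth function on a neighborhood of the spine (it behaves like $r$, not $r^2$, in normal directions) so the naive $f$ fails to be $C^1$ along $K^\pm$. Your fix---replacing $f$ near the spine by a local model built from a smooth tubular-neighborhood parametrization of the graph (together with collars of any negative boundary components), whose level sets are boundaries of regular neighborhoods of the spine, and then interpolating with a bump function in the $t$-direction---is the standard and correct resolution. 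The one place to be slightly more careful is near vertices of the graph of valence greater than two: the literal squared-distance function from a piecewise-linear graph has non-smooth level sets near such vertices, so you really want to invoke the existence of a smooth regular-neighborhood parametrization (a smooth plumbing of $D^2$-bundles over the edges glued over vertex balls) and take the resulting ``radius squared'' function, rather than the metric squared distance. With that reading, the argument is complete; the resulting $f$ is smooth, its level sets over $(-1,1)$ are all isotopic copies of $\Sigma$, and $f^{-1}(\pm 1) = K^\pm$ as required.
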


By definition, if two Heegaard splittings are represented by the same sweep-out then they are isotopic.  If two sweep-outs represent the same Heegaard splitting then after a sequence of edge slides on the spines of the sweep-outs, the sweep-outs will be isotopic.

A \textit{stable function} between smooth manifolds $M$ and $N$ is a smooth function $\phi : M \rightarrow N$ such that in the space $C^\infty(M,N)$ of smooth functions from $M$ to $N$, there is a neighborhood $N$ around $\phi$ such that each function in $N$ is isotopic to $\phi$.  A Morse function is a smooth function from a smooth manifold to $\mathbf{R}$ and one can think of stable functions as a generalization of Morse theory to functions whose ranges have dimension greater than one.

Given two sweep-outs, $f$ and $g$, their product is a smooth function $f \times g : M \rightarrow [-1,1] \times [-1,1]$.  (That is, we define $(f \times g)(x) = (f(x),g(x))$.)  Kobayashi~\cite{Kob:disc} has shown that after an isotopy of $f$ and $g$, we can assume that $f \times g$ is a stable function on the complement of the four spines.  The local behavior of stable functions between dimensions two and three has been classified~\cite{mather} and coincides with the classification by Cerf~\cite{cerf:strat} that was used by Rubinstein and Scharlemann~\cite{rub:compar}, who first used pairs of sweep-outs to compare Heegaard splittings.

At each point in the complement of the spines, the differential of the map $f \times g$ is a linear map from $\mathbf{R}^3$ to $\mathbf{R}^2$.  This map will have a one dimensional kernel for a generic point in $M$.  The \textit{discriminant set} for $f \times g$ is the set of points where the derivative has a higher dimensional kernel.  (In these dimensions, all the critical points in a stable function have two dimensional kernels.)  Mather's classification of stable functions~\cite{mather} implies that the discriminant set in this case will be a one dimensional smooth submanifold in the complement in $M$ of the spines.  It consists of all the points where a level surface of $f$ is tangent to a level surface of $g$.  Some examples are shown in Figure~\ref{grfig}.  (For a more detailed description see~\cite{Kob:disc} or~\cite{rub:compar}.)  
\begin{figure}[htb]
  \begin{center}
  \includegraphics[width=4in]{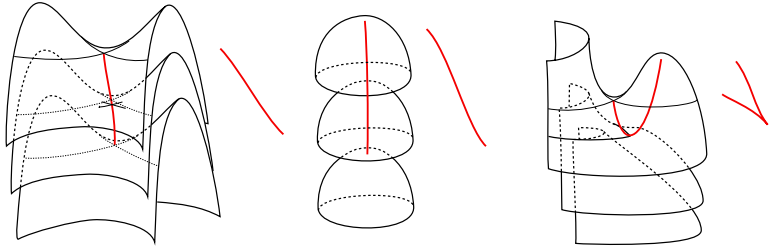}
  \caption{Edges of the graphic are formed by points where the level surface of $f$ are tangent to level surfaces of $g$.  Here, $f$ is the height function and its level surfaces are horizontal planes}
  \label{grfig}
  \end{center}
\end{figure}

The function $f \times g$ sends the discriminant to a graph in $[-1,1] \times [-1,1]$ called the \textit{Rubinstein-Scharlemann graphic} (or just the \textit{graphic} for short).  The parts of the graphic corresponding to the tangencies in Figure~\ref{grfig} are shown next to the surfaces.  The vertices in the interior of the graphic are valence four (crossings) or valence two (cusps).  The vertices in the boundary are valence one or two.

The pre-image in $f \times g$ of an arc $[-1,1] \times \{s\}$ is the level set $g^{-1}(s)$ and the restriction of $f$ to this level surface is a function $\phi_s$ with critical points in the levels where the arc $[-1,1] \times \{s\}$ intersects the graphic as well as possibly at the levels $-1$ and/or $1$.  The same is true if we switch $f$ and $g$.  

\begin{Def}
The function $f \times g$ is \textit{generic} if $f \times g$ is stable and each arc $\{t\} \times [-1,1]$ or $[-1,1] \times \{s\}$ contains at most one vertex of the graphic.
\end{Def}

If the arc does not intersect any vertices then every critical point of $\phi_s$ will be non-degenerate and away from $-1$ and $1$ no two critical points will be in the same level.  In other words, $\phi_s$ will be \textit{Morse away from} $-1$ and $1$.  If the arc passes through a vertex then in the levels other than $-1$ and $1$, $\phi_s$ will either have a degenerate critical point or two non-degenerate critical points at the same level.  We will say that such a $\phi_s$ is \textit{near-Morse away from} $-1$ and $1$.

\section{Spanning Heegaard surfaces}
\label{facingsect}

Let $f$ and $g$ be sweep-outs representing Heegaard splittings $(\Sigma, H^-, H^+)$ and $(\Sigma',H'^-,H'^+)$, respectively.  For each $s \in (-1,1)$, define $\Sigma'_s = g^{-1}(s)$, $H'^-_s = g^{-1}([-1,s])$ and $H'^+_s = g^{-1}([s,1])$.  Similarly, for $t \in (-1,1)$, define $\Sigma_t = f^{-1}(t)$.  We will say that $\Sigma_t$ is \textit{mostly above} $\Sigma'_s$ if each component of $\Sigma_t \cap H'^-_s$ is contained in a disk subset of $\Sigma_t$.  Similarly, $\Sigma_t$ is \textit{mostly below} $g_t$ if each component of $\Sigma_t \cap H'^+_s$ is contained in a disk in $\Sigma_t$.  

\begin{Def}
\label{ffdef}
We will say $g$ \textit{spans} $f$ if there are values $s, t_-, t_+ \in [-1,1]$ such that $\Sigma_{t_-}$ is mostly below $\Sigma'_s$ while $\Sigma_{t_+}$ is mostly above $\Sigma'_s$.  We will say that $g$ spans $f$ \textit{positively} if $t_- < t_+$ or \textit{negatively} if $t_- > t_+$.
\end{Def}

We can understand spanning in terms of the graphic as follows:  Let $R_a \subset (-1,1) \times (-1,1)$ be the set of all values $(t,s)$ such that $\Sigma_t$ is mostly above $\Sigma'_s$.  Let $R_b \subset (-1,1) \times (-1,1)$ be the set of all values $(t,s)$ such that $\Sigma_t$ is mostly below $\Sigma'_s$.  For a fixed $t$, there will be values $a,b$ such that $\Sigma_t$ will be mostly above $\Sigma'_s$ if and only if $s \in [-1,a)$ and mostly above $\Sigma'_s$ if and only if $s \in [b,1]$, as shown in Figure~\ref{mostlyfig}.
\begin{figure}[htb]
  \begin{center}
  \includegraphics[width=3.5in]{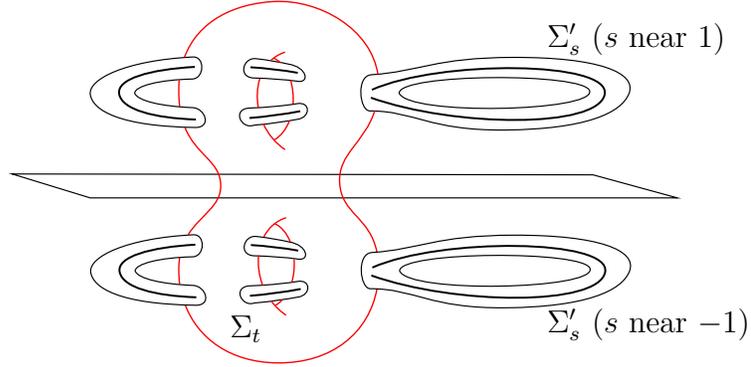}
  \put(-170,10){$\Sigma_t$}
  \put(-50,12){$\Sigma'_s$ ($s$ near $-1$)}
  \put(-50,120){$\Sigma'_s$ ($s$ near 1)}
  \caption{The vertical surface represents a level set $\Sigma_t$ of $f$, while the horizontal surfaces represent level surfaces $\Sigma'_s$ of $g$.  Given a fixed $t$ $\Sigma_t$ will be mostly above $\Sigma'_s$ for small $s$ and mostly above $\Sigma'_s$ for large $s$.}
  \label{mostlyfig}
  \end{center}
\end{figure}

Thus the regions $R_a$ and $R_b$ will be vertically convex, as in Figure~\ref{spanningfig}.  The sweep-out $g$ will span $f$ if in the graphic for $f \times g$, there is a horizontal arc that intersects both $R_a$ and $R_b$.  The figure also shows examples of pairs of sweep-outs that don't span, or that span with both signs.
\begin{figure}[htb]
  \begin{center}
  \includegraphics[width=2.5in]{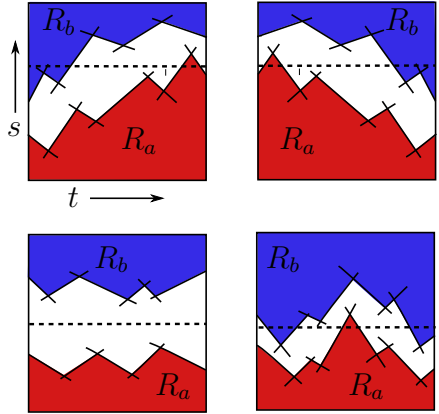}
  \put(-150,145){$R_b$}
  \put(-20,145){$R_b$}
  \put(-130,55){$R_b$}
  \put(-65,55){$R_b$}
  \put(-120,100){$R_a$}
  \put(-60,100){$R_a$}
  \put(-107,7){$R_a$}
  \put(-34,7){$R_a$}
  \put(-163,105){$s$}
  \put(-140,78){$t$}
  \caption{Clockwise from the top left, the graphics correspond to pairs of sweep-outs $f$, $g$ such that (1) $g$ spans $f$ positively, (2) $g$ spans $f$ negatively, (3) $g$ spans $f$ with both signs and (4) $g$ does not span $f$.  The dotted line represents the arc $[-1,1] \times \{s\}$.}
  \label{spanningfig}
  \end{center}
\end{figure}

Note that if $g$ spans $f$ positively then $-g$ will span $f$ negatively and $g$ will span $-f$ negatively.  

\begin{Lem}
\label{regionsboundedlem}
The closure of $R_a$ in $(-1,1) \times (-1,1)$ is bounded by arcs of the Rubinstein-Scharlemann graphic, as is the closure of $R_b$.
\end{Lem}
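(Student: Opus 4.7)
The plan is to show that $R_a$ is a union of connected components of $(-1,1)\times(-1,1)\setminus\Gamma$, where $\Gamma$ denotes the Rubinstein--Scharlemann graphic. This would immediately give $\partial R_a\subset\Gamma$, so the closure of $R_a$ in $(-1,1)\times(-1,1)$ is bounded by arcs of $\Gamma$ as required. The argument for $R_b$ will be identical, since the definition of ``mostly below'' is symmetric with ``mostly above.''

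The key step is to verify that on any connected component $U$ of $(-1,1)\times(-1,1)\setminus\Gamma$, the topological data consisting of the isotopy class of the curve system $\Sigma_t\cap\Sigma'_s$ in $\Sigma_t$, together with the labels recording which complementary regions of $\Sigma_t\setminus\Sigma'_s$ lie in $H'^-_s$ versus $H'^+_s$, is constant in $(t,s)$. For $(t,s)\notin\Gamma$ the surfaces $\Sigma_t$ and $\Sigma'_s$ meet transversally (by definition, $\Gamma$ is the image of the discriminant, i.e.\ the locus of tangencies), so the restriction of $f\times g$ to $(f\times g)^{-1}(U)$ is a submersion onto $U$. I would combine this with a smooth trivialization of $f$ over compact subintervals of $(-1,1)$, which identifies each $\Sigma_t$ with a fixed model surface $\Sigma$. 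Applying Ehresmann's theorem on compact slabs of the form $(f\times g)^{-1}(K)$ with $K\subset U$ compact, the curve systems $\Sigma_t\cap\Sigma'_s$ pull back to a smoothly varying family of transverse simple closed curves in $\Sigma$ related by an ambient isotopy; the side-labels are preserved because they are determined by the sign of $dg$ along the normal direction to each curve, which varies continuously.

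Once this local constancy is established, the property defining $R_a$---that every component of $\Sigma_t\cap H'^-_s$ be contained in a disk in $\Sigma_t$---depends only on this isotopy class and labeling, so it is constant on $U$. Hence $R_a\cap U$ is either all of $U$ or empty, which proves that $R_a$ is a union of components of the complement of $\Gamma$, completing the argument. The main obstacle will be verifying the local triviality carefully: one must check that $f\times g$ restricts to a proper submersion over compact subsets of $U$ (using that preimages of compact subsets of $(-1,1)^2$ stay bounded away from the spines in $M$) and then combine the Ehresmann trivialization with a trivialization of $f$ to produce a single ambient isotopy of $\Sigma$ carrying one curve system to the next.
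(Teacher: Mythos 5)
Your proof is correct, but it takes a genuinely different route from the paper's. The paper argues one vertical slice at a time: for fixed $t$, the critical levels of $g|_{\Sigma_t}$ are exactly where the arc $\{t\}\times[-1,1]$ meets the graphic, and, using the vertical convexity of $R_a$ established just before the lemma (which follows from the monotonicity in $s$ of the ``mostly above'' condition), the slice $R_a\cap(\{t\}\times(-1,1))$ is the interval up to the first such critical level at which some component of $\Sigma_t\cap H'^-_s$ fails to lie in a disk. You instead argue directly in two dimensions, proving the stronger statement that $R_a$ is a union of \emph{entire} connected components of $(-1,1)^2\setminus\Gamma$, via a proper-submersion/Ehresmann trivialization showing that the labeled isotopy class of $(\Sigma_t,\Sigma_t\cap\Sigma'_s)$ is constant on each component and that ``mostly above'' is an invariant of that combinatorial data. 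Your approach is structurally cleaner --- it gives $\partial R_a\subset\Gamma$ at once without invoking vertical convexity, and it makes explicit a local-constancy principle the paper leaves implicit in its final ``thus $R_a$ is bounded above by arcs'' step --- at the cost of heavier machinery (Ehresmann, properness away from the spines) compared to the paper's elementary one-dimensional Morse-theoretic argument. Both arguments are valid.
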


\begin{proof}
To see this, note that for fixed $t$, the restriction of $g$ to $\Sigma_t$ has singular points at precisely the levels where the arc $\{t\} \times [-1,1]$ intersect the graphic.  The intersection of $\{t\} \times [-1,1]$ with $R_a$ is an arc of the form $\{t\} \times [-1,s_a)$ where $s_a$ is the smallest critical point such that $g^{-1}([-1,s_a]) \cap \Sigma_t$ is not contained in a disk in $\Sigma_t$.  Thus $R_a$ is the region of $(-1,1) \times (-1,1)$ bounded above by some collection of arcs in the graphic.  The same argument can be applied to $R_b$.
\end{proof}

\begin{Def}
We will say that $(\Sigma', H'^-, H'^+)$ \textit{spans} $(\Sigma, H^-, H^+)$ \textit{positively} or \textit{negatively} if there are sweep-outs $f$ and $g$ representing $(\Sigma, H^-, H^+)$ and $(\Sigma', H'^-, H'^+)$, respectively, such that $g$ spans $f$ positively or negatively, respectively.
\end{Def}

Here, the convention that $f^{-1}(-1)$ is a spine of $H^-$ and $f^{-1}(1)$ is a spine for $H^+$ is important.  If $f$ represents $(\Sigma, H^-, H^+)$ then the sweep-out $-f$ will represent $(\Sigma, H^+, H^-)$.  Thus if $(\Sigma', H'^-, H'^+)$ spans $(\Sigma, H^-, H^+)$ positively then $(\Sigma', H'^+, H'^-)$ will span $(\Sigma, H^-, H^+)$ negatively and $(\Sigma', H'^-, H'^+)$ will span $(\Sigma, H^+, H^-)$ negatively.  In particular, if $(\Sigma', H'^-, H'^+)$ spans $(\Sigma, H^-, H^+)$ and $\Sigma'$ is flippable then $(\Sigma', H'^-, H'^+)$ will span $(\Sigma, H^-, H^+)$ with both signs.

As an example, note the following Lemma:

\begin{Lem}
\label{withitselflem}
Every Heegaard splitting spans itself positively.  The Heegaard splitting $(\Sigma, H^+, H^-)$ spans $(\Sigma, H^-, H^+)$ negatively.
\end{Lem}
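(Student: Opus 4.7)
The plan is to apply Definition \ref{ffdef} directly, using the same sweep-out for both $f$ and $g$ in the first assertion, and its sign-reversal for the second. Note that the definition of spanning only asks for the existence of values $s, t_-, t_+$ satisfying the ``mostly above/below'' conditions; it does not require that the product $f \times g$ be stable or generic, so taking $g = f$ (or $g = -f$) is allowed.

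For the first assertion, let $f$ be any sweep-out representing $(\Sigma, H^-, H^+)$ and set $g = f$. Choose $s = 0$, so that $H'^-_0 = f^{-1}([-1,0])$ and $H'^+_0 = f^{-1}([0,1])$. For any $t_- \in (-1, 0)$, the level surface $\Sigma_{t_-} = f^{-1}(t_-)$ lies entirely in $H'^-_0$, so $\Sigma_{t_-} \cap H'^+_0 = \emptyset$; the requirement that each component of this intersection be contained in a disk of $\Sigma_{t_-}$ is vacuously satisfied, so $\Sigma_{t_-}$ is mostly below $\Sigma'_0$. Symmetrically, for any $t_+ \in (0, 1)$ we have $\Sigma_{t_+} \cap H'^-_0 = \emptyset$, so $\Sigma_{t_+}$ is mostly above $\Sigma'_0$. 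Since $t_- < 0 < t_+$, the inequality $t_- < t_+$ holds, and $g$ spans $f$ positively.

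For the second assertion, take $g = -f$, which represents $(\Sigma, H^+, H^-)$ by the convention stated just before the lemma. Setting $s = 0$ again, the identifications of the positive and negative compression bodies are reversed: $H'^-_0 = f^{-1}([0, 1])$ and $H'^+_0 = f^{-1}([-1, 0])$. Repeating the same vacuous argument, $\Sigma_{t_-}$ is mostly below $\Sigma'_0$ for every $t_- \in (0, 1)$ and $\Sigma_{t_+}$ is mostly above $\Sigma'_0$ for every $t_+ \in (-1, 0)$. Now $t_- > t_+$, so $g$ spans $f$ negatively, which is exactly the statement that $(\Sigma, H^+, H^-)$ spans $(\Sigma, H^-, H^+)$ negatively.

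The only subtle point is the vacuous interpretation of the ``each component is contained in a disk'' clause when the relevant intersection is empty, i.e.\ the universal quantifier over an empty index set is automatically true; there is no genuine geometric obstacle beyond reading the definitions carefully.
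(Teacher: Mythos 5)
Your proof is correct, and it takes a genuinely more economical route than the paper's. You observe that Definition~\ref{ffdef} only asks for the existence of values $s,t_-,t_+$ with the appropriate ``mostly above/below'' properties, and that these are vacuously satisfied when the relevant intersection is empty; taking $g = f$ (resp.\ $g = -f$) and $s=0$ then gives empty intersections for any $t_-,t_+$ on the correct sides of $0$. The paper instead replaces the level surface $\Sigma = f^{-1}(0)$ by the graph of a Morse function $\phi\colon\Sigma\to[-\tfrac12,\tfrac12]$ inside a product collar and lets $g$ be a sweep-out with that graph as its zero level; for the second assertion the paper invokes the general remark that swapping the handlebodies reverses the sign of spanning, whereas you recompute directly with $g=-f$. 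What the paper's perturbation buys is a pair $(f,g)$ for which $f|_{\Sigma'}$ is honestly Morse, putting it closer to the generic position required in Lemma~\ref{isotopesweepslem} and the graphic arguments downstream; your pair $f\times f$ is maximally degenerate (the differential of $f\times f$ has a two-dimensional kernel everywhere), so a later user of this lemma must perturb $g$ to achieve genericity while preserving spanning. That perturbation step is straightforward here (the empty intersections $\Sigma_{t_-}\cap H'^+_0$ stay empty under a sufficiently $C^0$-small perturbation of $g$) and is in any case already implicit in the paper, since Lemma~\ref{stabfacinglem} does not produce a generic pair either. So both arguments are valid; yours is shorter, and the paper's is marginally better positioned for the genericity hypotheses used later.
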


\begin{proof}
Let $(\Sigma, H^-, H^+)$ be a Heegaard splitting represented by a sweep-out $f$ such that $f^{-1}(0)$ is the surface $\Sigma$.  Let $\phi$ be any Morse function on $\Sigma$ whose image is contained in $[-\frac{1}{2},\frac{1}{2}]$.  Identify $f^{-1}([-\frac{1}{2},\frac{1}{2}])$ with $\Sigma \times [-\frac{1}{2},\frac{1}{2}]$ such that $f^{-1}(t) = \Sigma \times \{t\}$ for each $t \in [-\frac{1}{2},\frac{1}{2}]$.

Let $\Sigma'$ be the graph of $\phi$ in $f^{-1}([-\frac{1}{2},\frac{1}{2}]) = \Sigma \times [-\frac{1}{2},\frac{1}{2}]$, i.e. the set of points $\{(x,\phi(x)) | x \in \Sigma\}$.  The surface $\Sigma'$ is isotopic to the level surface $\Sigma \times 0 = \Sigma$ so $\Sigma'$ determines a Heegaard splitting $(\Sigma', H'^-, H'^+)$ such that $f^{-1}(-1)$ is contained in $H'^-$ and $f^{-1}(1)$ is contained in $H'^+$.

There is a sweep-out $g$ representing $(\Sigma', H'^-, H'^+)$ such that $g^{-1}(0) = \Sigma'$.  Then $H'^-_0 = H'^-$ and $H'^+_0 = H'^+$.  For $t < -\frac{1}{2}$, $\Sigma_t$ is contained in $H'^-_0$, so $\Sigma_t$ is mostly below $\Sigma'_{\frac{1}{2}}$.  For $t > 0$, $\Sigma_t$ is contained in $H'^+_0$ so $\Sigma_t$ is mostly above $\Sigma'_0$.  Thus $g$ spans $f$ positively.  Since $(\Sigma', H'^-, H'^+)$ is isotopic to $(\Sigma, H^-, H^+)$, the sweep-out $g$ also represents $(\Sigma, H^-, H^+)$, so this Heegaard splitting spans itself positively.  Combining this example with the fact that switching the order of the handlebodies reverses the direction of the spanning implies the second half of the Lemma.
\end{proof}

\section{Stabilization}
\label{stablsect}

Let $(\Sigma, H^-, H^+)$ be a Heegaard splitting of a 3-manifold $M$ and $(\Xi, G^-, G^+)$ a Heegaard splitting of the 3-sphere, $S^3$.  Let $B \subset M$ be an open ball such that $B \cap \Sigma$ is a single open disk.  The intersection of $\Sigma$ with the boundary of the closure $\bar B$ is a simple closed curve.  Similarly, let $B' \subset S^3$ be an open ball with $B' \cap \Xi$ an open disk.  

The complement in $S^3$ of $B'$ is a closed ball and we would like to identify this ball with the closure in $M$ of $B$.  Let $\phi : S^3 \setminus B' \rightarrow \bar B$ be a homeomorphism that sends the loop $\Xi \cap \partial \bar B'$ onto the loop $\Sigma \cap \partial \bar B$, sends $G^- \cap \partial \bar B'$ onto $H^- \cap \partial \bar B$ and sends $G^+ \cap \partial \bar B'$ onto $H^+ \cap \partial \bar B$.  

Let $\Sigma^*$ be the union of $\Sigma \setminus B$ and the image $\phi(\Xi \setminus B')$.  Similarly, define $H^{*-} = (H^- \setminus B) \cup \phi(G^- \setminus B')$ and $H^{*+} = (H^+ \setminus B) \cup \phi(G^+ \setminus B')$.  The set $H^{*-}$ is a union of two handlebodies that intersect a disk, so $H^{*-}$ is itself a handlebody.  The same reasoning implies $H^{*+}$ is also a handlebody, so $(\Sigma^*, H^{*-}, H^{*+})$ is a Heegaard splitting of $M$.  Note that the surface $\Sigma^*$ has genus equal to the genus of $\Sigma$ plus the genus of $\Xi$.

\begin{Def}
The Heegaard splitting $(\Sigma^*, H^{*-}, H^{*+})$ constructed above is called a \textit{stabilization} of $(\Sigma, H^-, H^+)$.  
\end{Def}

Note that if we take $(\Xi, G^-, G^+)$ to be the genus zero Heegaard splitting, then $(\Sigma^*)$ is isotopic to $\Sigma$.  Thus every Heegaard splitting is a (trivial) stabilization of itself.

By Waldhausen's Theorem~\cite{wald:sphere}, two Heegaard splittings of $S^3$ are isotopic if and only if they have the same genus.  The balls $B$ and $B'$ are unique up to isotopies of $M$ and $M'$, respectively, preserving $\Sigma$ and $\Xi$, respectively.  Thus two stabilizations of the same Heegaard splitting are isotopic if and only if they have the same genus.

\begin{Lem}
\label{stabfacinglem}
If a Heegaard splitting $(\Sigma', H'^-, H'^+)$ spans a second Heegaard splitting $(\Sigma, H^-, H^+)$ positively then every stabilization of $(\Sigma', H'^-, H'^+)$ spans $(\Sigma, H^-, H^+)$ positively.  If $(\Sigma', H'^-, H'^+)$ spans $(\Sigma, H^-, H^+)$ negatively then every stabilization of $(\Sigma', H'^-, H'^+)$ spans $(\Sigma, H^-, H^+)$ negatively.
\end{Lem}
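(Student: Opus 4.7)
The plan is to lift spanning sweep-outs along the stabilization: given sweep-outs $f, g$ witnessing that $g$ spans $f$ positively, I will construct a sweep-out $g^*$ for a chosen stabilization that agrees with $g$ away from a tiny ball, and then show that the same witnessing parameters continue to exhibit positive spanning for $g^*$ against $f$. The negative case is symmetric. Since two stabilizations of the same genus are isotopic (as noted in the text after the definition of stabilization), it suffices to exhibit one convenient stabilization that spans.

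First I would fix $s, t_-, t_+ \in [-1,1]$ with $t_- < t_+$ such that $\Sigma_{t_-}$ is mostly below $\Sigma'_s$ and $\Sigma_{t_+}$ is mostly above $\Sigma'_s$. Because $\Sigma_{t_-} \cup \Sigma_{t_+}$ is a codimension-one subset of $M$, its intersection with $\Sigma'$ is nowhere dense there, so I can choose a point $p \in \Sigma'$ off this union which is a regular point of both $f$ and $g$. A sufficiently small open ball $B$ around $p$ meets $\Sigma'$ in a single disk and is disjoint from $\Sigma_{t_-} \cup \Sigma_{t_+}$; I perform the stabilization inside this $B$, producing $(\Sigma'^*, H'^{*-}, H'^{*+})$ with $\Sigma'^* \setminus B = \Sigma' \setminus B$.

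Next I would build the new sweep-out $g^*$ by a purely local modification of $g$: keep $g^* \equiv g$ outside a slightly larger concentric ball, and on $B$ replace $g|_B$ by a Morse function on the ball whose level set at parameter $s$ is the surface-with-boundary $\Sigma'^* \cap B$. This introduces pairs of critical points inside $B$ corresponding to the cores and co-cores of the stabilization handles, and the spines $g^{*-1}(\pm 1)$ become the original spines of $H'^\pm$ with those extra edges attached; hence $g^*$ genuinely represents $(\Sigma'^*, H'^{*-}, H'^{*+})$.

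With this set-up the spanning check is immediate: because $\Sigma_{t_\pm} \cap B = \emptyset$ and $H'^{*\pm}_s$ agrees with $H'^\pm_s$ outside $B$, we have
\[
\Sigma_{t_-} \cap H'^{*+}_s = \Sigma_{t_-} \cap H'^+_s \quad\text{and}\quad \Sigma_{t_+} \cap H'^{*-}_s = \Sigma_{t_+} \cap H'^-_s,
\]
each of which is disk-contained in its ambient $\Sigma_{t_\pm}$ by hypothesis. Hence $\Sigma_{t_-}$ is still mostly below and $\Sigma_{t_+}$ still mostly above $\Sigma'^*_s$, and with $t_- < t_+$ this says that $g^*$ spans $f$ positively, so $(\Sigma'^*, H'^{*-}, H'^{*+})$ spans $(\Sigma, H^-, H^+)$ positively. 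I expect the only step requiring care to be the explicit Morse-theoretic construction of $g^*$ inside $B$ realizing $\Sigma'^*$ as the level surface at $s$ with correctly stabilized spines; once that is in hand everything else is a bookkeeping verification.
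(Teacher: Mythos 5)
Your proposal is correct and follows essentially the same route as the paper: both proofs fix the witnessing parameters $s, t_-, t_+$, carry out the stabilization inside a small ball $B$ disjoint from $\Sigma_{t_-}\cup\Sigma_{t_+}$ (the paper ensures this by taking $\bar B\subset f^{-1}(t_-,t_+)$, you by choosing $p\in\Sigma'$ off the two level surfaces), and then observe that since the surgery is supported in $B$, the intersections $\Sigma_{t_\pm}\cap H'^{*\mp}_s$ are unchanged, so the same $s,t_-,t_+$ witness spanning for the stabilized sweep-out. Your extra discussion of how to build $g^*$ locally is more explicit than the paper (which simply asserts the existence of a sweep-out with $\Sigma^*$ as a level surface), but the argument is the same.
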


\begin{proof}
Since $(\Sigma', H'^-, H'^+)$ spans $(\Sigma, H^-, H^+)$ positively, let $f$ and $g$ be sweep-outs for $(\Sigma, H^-, H^+)$ and $(\Sigma', H'^-, H'^+)$, respectively, such that $g$ spans $f$ positively.  Let $s, t_-, t_+ \in [-1,1]$ be values such that, $\Sigma_{t_-}$ is mostly below $\Sigma'_s$ and $\Sigma_{t_+}$ is mostly above $\Sigma'_s$.

Replace $(\Sigma', H'^-, H'^+)$ by the isotopic Heegaard splitting whose Heegaard surface is $\Sigma'_s = g^{-1}(s)$.  Let $B \subset M$ be an open ball as above whose closure is contained in $f^{-1}(t_-,t_+)$.  Let $B' \subset S^3$ be an open ball that intersects a Heegaard splitting for $S^3$ of the appropriate genus in an open ball.  Let $(\Sigma^*, H^{*-}, H^{*+})$ be the stabilization of $(\Sigma', H'^-, H'^+)$ constructed by identifying $S^3 \setminus B'$ with $\bar B$.  Let $g^*$ be a sweep-out such that $(g^*)^{-1}(0) = \Sigma^*$.

The complement in $B$ of $\Sigma'$ is equal (as a set) to the complement of $\Sigma^*$, and the same is true for the corresponding compression bodies in the Heegaard splittings.  Since $\Sigma_{t_-}$ is mostly below $\Sigma'_s = \Sigma'$ and disjoint from $B$, $\Sigma_{t_-}$ is also mostly below $\Sigma^*_0 = (g^*)^{-1}(0) = \Sigma^*$.  Similarly, $\Sigma_{t_+}$ is mostly above $\Sigma^*_0$, so $(\Sigma^*, H^{*-}, H^{*+})$ spans $(\Sigma, H^-,H^+)$ with the same sign as $(\Sigma', H'^-, H'^+)$.
\end{proof}

Consider a 3-manifold $M$ with a single boundary component.  In this situation, we will adopt the convention that if $(\Sigma, H^-, H^+)$ is a Heegaard splitting for $M$ then $H^-$ is a handlebody and $H^+$ is a compression body.  We can decompose $H^+$ into $\partial M \times [-1,1]$ and a collection of 1-handles.  Let $\alpha$ be a vertical arc in $\partial M \times [-1,1]$ disjoint from the 1-handles.

A regular neighborhood $N$ in $H^+$ of $\alpha \cup \partial M$ is homeomorphic to $\partial M \times [0,1]$ and intersects $\Sigma$ in a single disk.  Thus $H^- \cup N$ is a compression body.  The complement in $\partial M \times [0,1]$ of $N$ is homeomorphic to the complement in $\partial M \times [0,1]$ of a regular neighborhood of $\alpha$.  This is a punctured surface cross an interval, which is homeomorphic to a handlebody.  Thus $H^+ \setminus N$ is homeomorphic to the union of a handlebody and a collection of 1-handles.  This union is a handlebody so $H^- \cup N$ and $H^+ \setminus N$ determine a Heegaard splitting.

\begin{Def}
A \textit{boundary stabilization} of $(\Sigma, H^-, H^+)$ is the Heegaard splitting $(\Sigma^*, H^{*-}, H^{*+})$ where $H^{*-} = H^+ \setminus N$, $H^{*+} = H^- \cup \bar N$ and $\Sigma^*$ is their common boundary.
\end{Def}

Note that we have labeled $H^{*-}$ and $H^{*+}$ so as to keep the convention that $H^{*-}$ is a handlebody and $H^{*+}$ is a compression body.  The genus of a boundary stabilization is equal to the genus of the boundary plus the genus of the original Heegaard splitting.  The isotopy class of the boundary stabilization is determined by the vertical arc $\alpha$.  Such an arc is unique up to isotopy so any two boundary stabilizations of the same Heegaard splitting are isotopic.

\begin{Lem}
\label{bstabfacinglem}
If $(\Sigma', H'^-, H'^+)$ spans $(\Sigma, H^-, H^+)$ positively then a boundary stabilization of $(\Sigma', H'^-, H'^+)$ spans $(\Sigma, H^-, H^+)$ negatively.
\end{Lem}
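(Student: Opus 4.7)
The plan is to exploit the fact that boundary stabilization interchanges the roles of $H'^-$ and $H'^+$ outside a small region near $\partial M$: once the stabilizing handle $N$ is pushed far enough out toward $\partial M$ that it misses the level surfaces witnessing the positive spanning of $g$ over $f$, those same level surfaces will witness negative spanning for the stabilized splitting.

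First I would fix sweep-outs $f$ and $g$ realizing positive spanning, with witness values $s, t_-, t_+$ satisfying $t_- < t_+$. Because $H^+$ is a compression body under the running convention for this section, $\partial M$ lies in the spine $f^{-1}(1)$, so there is an open neighborhood $U$ of $\partial M$ in $M$ with $\bar U \subset f^{-1}((t_+,1])$. Using the flexibility of the decomposition $H'^+ = \partial M \times [-1,1] \cup \text{(1-handles)}$, I would take the collar $\partial M \times [-1,1]$ to lie inside $U \cap H'^+$, so that the vertical arc $\alpha$ and the regular neighborhood $N$ of $\partial M \cup \alpha$ also sit inside $U$. In particular, $N$ is disjoint from both $\Sigma_{t_-}$ and $\Sigma_{t_+}$.

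With this $N$ I would form the boundary stabilization $(\Sigma^*, H^{*-}, H^{*+})$, where $H^{*-} = H'^+ \setminus N$ and $H^{*+} = H'^- \cup \bar N$, and choose a sweep-out $g^*$ representing this splitting with $(g^*)^{-1}(0) = \Sigma^*$. Since $\Sigma_{t_-}$ is disjoint from $N$, the intersection $\Sigma_{t_-} \cap H^{*-}$ equals $\Sigma_{t_-} \cap H'^+$, which lies in disks in $\Sigma_{t_-}$ by the assumption that $\Sigma_{t_-}$ is mostly below $\Sigma'_s$; hence $\Sigma_{t_-}$ is mostly \emph{above} $\Sigma^*_0$. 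Symmetrically, $\Sigma_{t_+} \cap H^{*+} = \Sigma_{t_+} \cap H'^-$ lies in disks, so $\Sigma_{t_+}$ is mostly \emph{below} $\Sigma^*_0$. Setting $t_+^* = t_-$ and $t_-^* = t_+$, the inequality $t_-^* > t_+^*$ shows that $g^*$ spans $f$ negatively.

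The main delicate point I anticipate is the first step: realizing the collar $\partial M \times [-1,1]$ from the compression body decomposition of $H'^+$ as a subset of the prescribed open set $U$. This reduces to the standard fact that a compression body admits a collar of its negative boundary of arbitrarily small thickness in any ambient metric; once that is granted, the rest is just the $H^- \leftrightarrow H^+$ swap inherent in boundary stabilization combined with the disjointness of $\Sigma_{t_\pm}$ from $N$.
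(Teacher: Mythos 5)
Your outline matches the paper's: a boundary stabilization swaps the roles of $H'^-$ and $H'^+$ away from the stabilizing region $N$, so the witnesses $t_-, t_+$ for positive spanning get relabelled $t_+^*, t_-^*$ and witness negative spanning. The step where you diverge from the paper is exactly the one you flag as delicate, and it does not go through as stated.

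The product region $\partial M \times [-1,1]$ in the decomposition $H'^+ = \partial M \times [-1,1] \cup (\text{1-handles})$ is not a thin collar of the negative boundary: $\partial M \times \{1\}$ is, away from the 1-handle attaching disks, a subsurface of $\partial_+ H'^+ = \Sigma'$. After replacing $\Sigma'$ by $g^{-1}(s)$ (which you implicitly do when you invoke ``mostly below $\Sigma'_s$''), this positive boundary is a Heegaard surface sitting in the interior of $M$ and meeting both $\Sigma_{t_-}$ and $\Sigma_{t_+}$. The vertical arc $\alpha$ therefore runs from $\partial M$ all the way to $\Sigma'$, and $N$, being a regular neighborhood of $\alpha \cup \partial M$, must reach $\Sigma'$ too; it cannot be contained in any neighborhood $U$ of $\partial M$ with $\bar U \subset f^{-1}((t_+,1])$. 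The ``arbitrarily thin collar'' fact you cite concerns a collar of $\partial_- H'^+$ and is the wrong object here. Worse, since mostly-above controls only $\Sigma_{t_+} \cap H'^-$ and says nothing about $\Sigma_{t_+} \cap H'^+$, nothing prevents $\Sigma_{t_+}$ from separating $\partial M$ from $\Sigma'$ inside $\partial M \times [-1,1]$, in which case \emph{every} choice of $\alpha$ meets $\Sigma_{t_+}$. The repair is the weaker statement the paper actually uses: take a thin collar of $\partial M$ (which genuinely avoids $\Sigma_{t_\pm}$) together with a thin tube around $\alpha$, so that $N \cap \Sigma_{t_-}$ and $N \cap \Sigma_{t_+}$ are each a finite union of disks. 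Then $\Sigma_{t_-} \cap H^{*-}_0$ is a subset of $\Sigma_{t_-} \cap H'^+$, and $\Sigma_{t_+} \cap H^{*+}_0$ is $\Sigma_{t_+} \cap H'^-$ together with finitely many disks; both remain contained in disks, and your relabelling of $t_\pm$ then gives negative spanning.
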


\begin{proof}
Let $f$ and $g$ be sweep-outs representing $(\Sigma, H^-, H^+)$ and $(\Sigma', H'^-, H'^+)$, respectively.  Let $s,t_-,t_+ \in [-1,1]$ be as in Definition~\ref{ffdef}.  Replace $(\Sigma', H'^-, H'^+)$ with the isotopic Heegaard splitting whose Heegaard surface is $g^{-1}(s)$.  Thus $H'^-_s = H'^-$ and $H'^+_s = H'^+$.  

Let $\alpha \subset H'^+$ be an arc defining a boundary stabilization of $(\Sigma', H'^-, H'^+)$, and $N$ a regular neighborhood in $H^+$ of $\alpha \cup \partial M$.  We can choose this regular neighborhood such that $N \cap \Sigma_{t_-}$ and $N \cap \Sigma_{t_+}$ is each a collection of disks.

Let $(\Sigma^*, H^{*-}, H^{*+})$ be the boundary stabilization determined by $\alpha$ and $N$.  Let $g^*$ be a sweep-out representing $(\Sigma^*, H^{*-}, H^{*+})$ such that $ \Sigma^* = \Sigma^*_0 = (g^*)^{-1}(0)$.  The intersection of $\Sigma_{t_-}$ with $H^{*-}_0$ is the union of $\Sigma_{t_-} \cap H^+_0$ and a collection of disjoint disks.  Since $\Sigma_{t_-}$ is mostly below $\Sigma'_0$, it is mostly above $\Sigma^*_0$.  Similarly, $\Sigma_{t_+}$ is mostly below $\Sigma^*_0$, so $g^*$ spans $f$ negatively.
\end{proof}

\section{Compressing Heegaard surfaces}
\label{facingbothsect}

We have seen that every stabilization or boundary stabilization of a Heegaard splitting spans the original.  In this section we will prove the converse.  Let $f$ and $g$ be sweep-outs representing Heegaard splittings $(\Sigma, H^-, H^+)$ and $(\Sigma', H'^-, H'^+)$, respectively, with genera $k$ and $k'$, respectively.

\begin{Lem}
\label{bothfacinglem}
Assume $M$ is irreducible.  If $g$ spans $f$ then $\Sigma'$ is an amalgamation along $\Sigma$.  If $g$ spans $f$ both positively and negatively then $\Sigma'$ is an amalgamation along a union of two copies of $\Sigma$ such that $k' \geq 2k$.
\end{Lem}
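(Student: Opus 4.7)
My plan is to use the mostly-below and mostly-above conditions to extract essential compressing disks for $\Sigma'$, then recognize the resulting compression data as exhibiting $\Sigma'$ as an amalgamation. After putting $f$ and $g$ in general position I fix generic $s, t_-, t_+$ from the spanning hypothesis. An innermost component of $\Sigma_{t_-} \cap H'^+_s$ sits inside a disk $D \subset \Sigma_{t_-}$ with $\partial D \subset \Sigma'_s$ and interior in $H'^+_s$, by the mostly-below condition. Pushing $D$ slightly off $\Sigma_{t_-}$ gives an embedded disk in $H'^+_s$ with boundary on $\Sigma'_s$. If $\partial D$ is inessential on $\Sigma'_s$, it cobounds a subdisk of $\Sigma'_s$, and together with $D$ this subdisk bounds a 2-sphere that must bound a ball by the irreducibility of $M$; isotoping $\Sigma'_s$ across that ball reduces $|\Sigma'_s \cap \Sigma_{t_-}|$. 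Iterating, every innermost disk yields an essential compressing disk for $\Sigma'_s$ in $H'^+_s$; symmetrically $\Sigma_{t_+}$ produces essential compressing disks in $H'^-_s$.

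For the first claim, compressing $\Sigma'_s$ along these two families and then reading off the compression-body structure of $H'^\pm_s$ exhibits $\Sigma'$ as obtained by tubing Heegaard splittings of $H^-$ and $H^+$ along arcs dual to the compression disks, which is exactly an amalgamation along $\Sigma$. For the second claim, the both-signs hypothesis yields two spanning configurations $(s, t_-, t_+)$ with $t_- < t_+$ and $(s', t'_-, t'_+)$ with $t'_+ < t'_-$, producing two families of essential compressing disks for $\Sigma'$ that live in opposite ``layers'' of $f^{-1}([-1,1])$ because of the opposite orderings. Compressing $\Sigma'$ along both families and discarding trivial components produces two disjoint parallel copies of $\Sigma$ in $M$, exhibiting $\Sigma'$ as an amalgamation across two such copies. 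The genus bound now follows from an Euler characteristic count: with $n$ total compressions, $(2-2k') + 2n = 2(2-2k)$, so $k' = 2k - 1 + n$; at least one compression must be separating (to split $\Sigma'$ into the two components needed to recover two copies of $\Sigma$), so $n \geq 1$ and hence $k' \geq 2k$.

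The main obstacle is verifying that in the both-signs case the two families of compressing disks genuinely come from disjoint layers of $M$, and hence yield two honestly parallel copies of $\Sigma$ rather than collapsing to a single copy. The argument must exploit the opposite orderings $t_- < t_+$ and $t'_+ < t'_-$ to separate the corresponding $f$-levels, and combine this with the compression-body structure of $H'^\pm$ to preclude collapse. Controlling this interaction, together with maintaining essentiality of the compressing disks through the iterated isotopy reductions, is where the irreducibility of $M$ and the precise positioning afforded by the spanning definition do the real work.
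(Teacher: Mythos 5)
Your overall strategy --- extract essential compressing disks for $\Sigma'$ from the mostly-above/mostly-below conditions, compress, and read off an amalgamation --- does match the paper's skeleton, but there are two genuine gaps, and the second is the heart of the matter.

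First, you assert without proof that compressing $\Sigma'$ down to a surface $F$ exhibits $\Sigma'$ as an amalgamation along $F$. This is exactly the content of the paper's Lemma~\ref{compresstoamalglem}, and it is not automatic: the paper proves it by induction on the compressions, using Casson--Gordon's Lemma 1.1 to isotope each compressing disk to meet the intermediate Heegaard surface in a single loop, and then carefully rebuilding compression bodies at each step. Your phrase ``reading off the compression-body structure'' hides the argument rather than supplying it. You would also need the classification of incompressible surfaces in $F \times I$ (the paper's Lemma~\ref{crossisepsurfacelem}) to conclude that the compressed surface actually contains copies of $\Sigma$ rather than something else.

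Second, and more seriously: in the both-signs case you keep the two spanning configurations $(s,t_-,t_+)$ and $(s',t'_-,t'_+)$ with possibly different $g$-levels $s \neq s'$, and claim the two resulting disk families live in ``disjoint layers.'' You then explicitly flag that ``verifying that in the both-signs case the two families of compressing disks genuinely come from disjoint layers of $M$'' is the main obstacle, but you never resolve it --- and as stated it will not go through, because the disks from the two configurations are compressing disks for two \emph{different} isotopic copies $\Sigma'_s$ and $\Sigma'_{s'}$ of the Heegaard surface, and there is no a priori way to combine the two compression sequences. The paper resolves this with Lemma~\ref{onesthreetslem}, which converts the two spanning configurations into a \emph{single} $s$ and three values $t_- < t_0 < t_+$ such that $\Sigma_{t_-}, \Sigma_{t_+}$ are mostly on one side of $\Sigma'_s$ while $\Sigma_{t_0}$ is mostly on the other. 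This is the key combinatorial step: once everything is phrased with one fixed $\Sigma'_s$ separating three nested $f$-levels in an alternating pattern, compressing $\Sigma'_s$ to be disjoint from all three forces two disjoint separating pieces between $t_-$ and $t_0$ and between $t_0$ and $t_+$, whence two parallel copies of $\Sigma$. Without something playing the role of this lemma, your argument has a hole precisely where you identify the ``real work.'' Finally, your Euler-characteristic count for the genus bound implicitly assumes the compressed surface consists of exactly two copies of $\Sigma$ with no extra components; the paper's argument is simpler and more robust --- compression cannot increase total genus, and the compressed surface contains two genus-$k$ components, so $k' \geq 2k$.
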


By amalgamation, we mean the following:  Let $F \subset M$ be a separating surface and let $(\Sigma^\#, H^{\#-}, H^{\#+})$ and $(\Sigma^*, H^{*-}, H^{*+})$ be Heegaard splittings for the closures of the components of $M \setminus F$.  These determine a handle decomposition for $M$ in which some of the 2-handles are added before some of the 1-handles.  If we rearrange the order of the handles, we can produce a Heegaard splitting $(\Sigma', H'^-, H'^+)$ for all of $M$, as in Figure~\ref{amalgfig}.  We will say that the resulting Heegaard surface $\Sigma'$ is an \textit{amalgamation along $F$ of $\Sigma^\#$ and $\Sigma^*$}.  See~\cite{sch:cross} for a more detailed description of this construction.
\begin{figure}[htb]
  \begin{center}
  \includegraphics[width=2.5in]{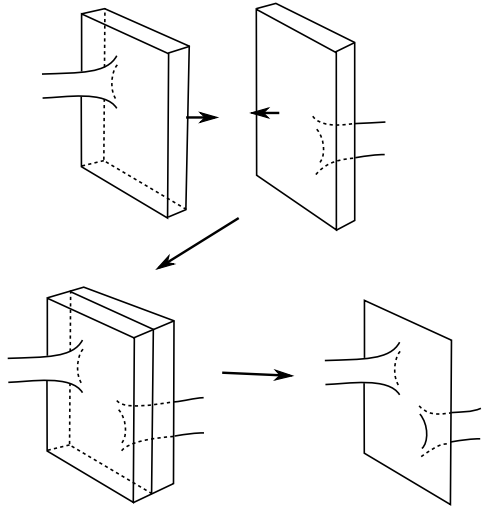}
  \caption{Amalgamating two Heegaard splittings along a separating surface.}
  \label{amalgfig}
  \end{center}
\end{figure}

The classification of Heegaard splittings of handlebodies and compression bodies~\cite[Corollary 2.12]{st:crossi} implies that if $\Sigma'$ is an amalgamation along $\Sigma$ then $\Sigma'$ is a stabilization of either $\Sigma$ or a boundary stabilization of $\Sigma$.  We will prove Lemma~\ref{bothfacinglem} as a corollary of Lemmas~\ref{compresstoamalglem},~\ref{crossisepsurfacelem} and~\ref{onesthreetslem}, the first of which gives a sufficient condition for determining when $\Sigma'$ is an amalgamation along a surface $F$.

\begin{Lem}
\label{compresstoamalglem}
Let $(\Sigma', H'^-, H'^+)$ be a Heegaard splitting for an irreducible 3-manifold $M$ and let $\Sigma'=S_0, S_1, \dots, S_n = F$ be a sequence of surfaces such that up to isotopy, each $S_{i+1}$ is the result of compressing $S_i$ along a disk $D_i$ properly embedded in the complement of $S_i$.  Then $\Sigma'$ is an amalgamation along $F$.
\end{Lem}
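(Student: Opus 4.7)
The plan is to induct on the length $n$ of the compression sequence. For the base case $n = 0$, we have $F = \Sigma'$; the compression bodies $H'^-$ and $H'^+$ each admit a trivial Heegaard splitting whose Heegaard surface is isotopic to $\partial_+$, and amalgamating these along $F = \Sigma'$ recovers $\Sigma'$.

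The heart of the proof is the case $n = 1$. Suppose $F = S_1$ is obtained from $\Sigma'$ by a single compression along a disk $D_0$, and without loss of generality take $D_0 \subset H'^+$. Let $N(D_0)$ be a regular neighborhood of $D_0$ in $H'^+$, so that $F$ separates $M$ into $U^+ = H'^+ \setminus N(D_0)$ and $U^- = H'^- \cup N(D_0)$. Since cutting a compression body along a properly embedded disk yields compression bodies, $U^+$ is itself a compression body and I equip it with its trivial Heegaard splitting. On the other side, I would construct a Heegaard splitting of $U^-$ by thickening $F$ to a collar in $U^-$ and attaching a single 1-handle along an arc in $N(D_0)$ dual to $D_0$; tubing the two copies of $D_0$ across this arc transforms $F$ back into $\Sigma'$ up to isotopy, so the Heegaard surface of this splitting is isotopic to $\Sigma'$ and the amalgamation along $F$ of these two splittings gives $\Sigma'$.

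For the general case $n > 1$, apply the inductive hypothesis to the subsequence $\Sigma' = S_0, \ldots, S_{n-1}$, obtaining that $\Sigma'$ is an amalgamation along $S_{n-1}$ of Heegaard splittings with surfaces $\Sigma^\#$ and $\Sigma^*$ on each side. The disk $D_{n-1}$ for the final compression lies on one side of $S_{n-1}$, say the side containing $\Sigma^*$. Apply the $n = 1$ case internally to that side to promote its Heegaard splitting to an amalgamation along the portion of $F$ inside that side. Combining with the unchanged splitting on the other side yields Heegaard splittings of the two components of $M \setminus F$ whose amalgamation is $\Sigma'$. The classification of Heegaard splittings of compression bodies~\cite[Corollary 2.12]{st:crossi} is invoked to identify the resulting splittings as they are combined.

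The main obstacle is the bookkeeping in combining amalgamations: showing that an amalgamation along $S_{n-1}$ together with a further amalgamation along $F$ inside one side can be repackaged as a single amalgamation along $F$. Irreducibility of $M$ enters here to guarantee that any 2-spheres that appear in the construction bound 3-balls and can be absorbed without affecting the isotopy class of the resulting Heegaard splitting, so the two passes of amalgamation collapse cleanly into one.
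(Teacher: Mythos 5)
Your base cases $n=0$ and $n=1$ are fine and essentially match the paper. The problem is the inductive step, where you propose to ``apply the $n=1$ case internally to that side.'' The $n=1$ case is a statement about compressing a \emph{Heegaard surface} along a disk in one of the two compression bodies. But in the inductive step the disk $D_{n-1}$ is a compressing disk for the \emph{boundary surface} $S_{n-1}$ of the component $M^*_{n-1}$, not a compressing disk for the Heegaard surface $\Sigma^*$ of that component. A priori $D_{n-1}$ can meet $\Sigma^*$ in a complicated family of circles, so the $n=1$ argument simply does not apply, and there is no obvious reason why the Heegaard splitting of $M^*_{n-1}$ should amalgamate along the compressed boundary. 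This is exactly the technical heart of the lemma and it is missing from your proposal.

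The paper's actual inductive step handles this by invoking Casson's Lemma 1.1 from~\cite{cass:red}: after isotopies and $1$-surgeries one can arrange $D_{n-1} \cap \Sigma^*$ to be a single circle, and irreducibility of $M$ is then used to argue that the $1$-surgeries do not change the isotopy class of $D_{n-1}$. With $D_{n-1}$ meeting $\Sigma^*$ in one circle, the paper cuts the Heegaard splitting of $M^*_{n-1}$ directly along a neighborhood of $D_{n-1}$ --- producing a new pair of compression bodies on the near side and absorbing a $2$-handle into the splitting of $M^\#_{n-1}$ on the far side --- and checks by hand that these amalgamate back to $\Sigma'$. Your attribution of irreducibility to ``absorbing $2$-spheres'' and your appeal to~\cite[Corollary 2.12]{st:crossi} are both misplaced: irreducibility is used for the $1$-surgery step in Casson's lemma, and the classification result from~\cite{st:crossi} is used elsewhere in the paper (to convert amalgamations into stabilizations), not in the proof of this lemma. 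To repair your argument you would need to replace ``apply the $n=1$ case internally'' with an argument that controls the intersection of $D_{n-1}$ with the Heegaard surface $\Sigma^*$, which is precisely where the real content lies.
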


\begin{proof}
Let $D_0$ be the compressing disk for $S_0$ such that compressing $S_0$ across $D_0$ produces $S_1$.  Without loss of generality, assume $D_0$ is contained in $H'^-$.  Let $N_0$ be the union of a regular neighborhood in $H'^-$ of $D_0$ and a regular neighborhood in $H'^+$ of $\Sigma$.  This set is a compression body with $\partial_- N_0 = S_1$ and $\partial_+ N_0$ a surface parallel to $\Sigma$.  

The complement in $H'^+$ of $N_0$ is a handlebody, so $N_0$ determines a Heegaard splitting for one component of the complement of $S_1$.  The other component of $M \setminus S_1$ is the handlebody $H^- \setminus N_0$.  This component has a Heegaard splitting consisting of a surface parallel to $S_1$.  These Heegaard splittings are shown in the second picture in Figure~\ref{firststepfig}.  The third and fourth pictures suggest why $\Sigma'$ is an amalgamation of these Heegaard splittings for the components of $M \setminus S_1$.  The details of this reverse construction are left to the reader.
\begin{figure}[htb]
  \begin{center}
  \includegraphics[width=4.5in]{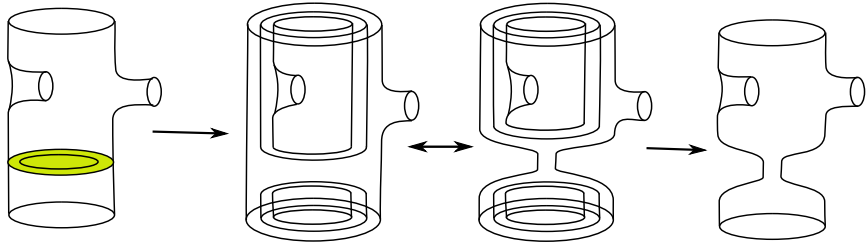}
  \caption{After compressing $S_0$ down to $S_1$, we can construct Heegaard splittings for the complementary components such that amalgamating along $S_1$ produces a surface isotopic to $S_0$.}
  \label{firststepfig}
  \end{center}
\end{figure}

For $i > 0$, let $M^*_i$ and $M^\#_i$ be the closures of the components of $M \setminus S_i$.  Assume $\Sigma'$ is an amalgamation along $S_i$ of Heegaard splittings $(\Sigma^*_i,H^{*-}_i,H^{*+}_i)$ and $(\Sigma^\#_i,H^{\#-}_i,H^{\#+}_i)$ for $M^*$ and $M^\#$, respectively.  Let $D_i$ be the disk such that compressing $S_i$ along $D_i$ produces $S_{i+1}$.  This disk is contained in one of the components of $M \setminus S_i$ and we will assume, without loss of generality, that it is contained in $M^*$.

By Lemma 1.1 in~\cite{cass:red}, there is a sequence of isotopies and 1-surgeries (compressing along disks) of $D_i$ after which $D_i \cap \Sigma^*_i$ is a single loop.  Because $M$ is irreducible, the 1-surgeries do not change the isotopy class of $D_i$, and we can assume that $D_i$ has been isotoped to intersect $\Sigma^*_i$ in a single loop.  

After this isotopy, the disk $D_i$ intersects the compression body $H^{*+}_i$ in an annulus with one boundary loop in $\partial_- H^{*+}_i$ and the other boundary component in $\partial_+ H^{*+}_i$.  The intersection with the compression body $H^{*-}_i$ is a properly embedded, essential disk.  

Let $N$ be a regular neighborhood in $M^*_i$ of $D_i$ and let $N'$ be a regular neighborhood of the closure of $N$.  The intersection $H^{*-}_i \cap N'$ is a regular neighborhood of a properly embedded essential disk so $H^{*-}_{i+1} = H^{*-}_i \setminus N'$ is a compression body.  Moreover the set $H^{*+}_{i+1} = (H^{*+}_i \cup \bar N') \setminus N$ is a compression body that we can think of as compressing the surface cross interval part of $H^{*+}_i$ across $D_i$, as in Figure~\ref{compressokfig}.  The negative boundary of $H^{*+}_{i+1}$ is isotopic to $S_{i+1}$ so we have constructed a Heegaard splitting for one of the components of $M \setminus S_{i+1}$.
\begin{figure}[htb]
  \begin{center}
  \includegraphics[width=3.5in]{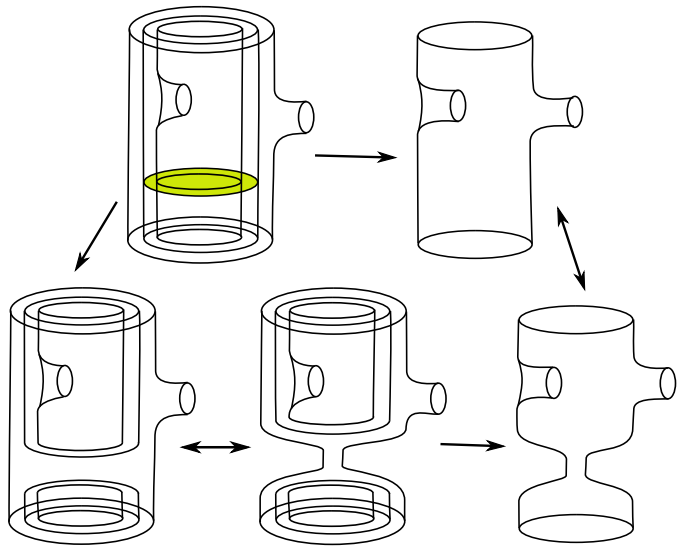}
  \caption{Compressing $S_i$ along a disk that intersects $\Sigma^*_i$ in a single loop suggests a new pair of Heegaard splittings that amalgamate to $\Sigma'$.}
  \label{compressokfig}
  \end{center}
\end{figure}

To construct a Heegaard splitting of the other complementary component, let $H^{\#-}_{i+1} = H^{\#-}_i$ and let $H^{\#+}_{i+1} = H^{\#+}_i \cup \bar N$.  The first is a compression body by definition.  The second is the result of gluing a 2-handle into the negative boundary of a compression body.  The resulting set is also a compression body so we have constructed a Heegaard splitting for the second component of $M \setminus S_{i+1}$, as in Figure~\ref{compressokfig}.  The reader can check that amalgamating these two Heegaard splittings produces $(\Sigma', H'^-, H'^+)$.
\end{proof}

\begin{Lem}
\label{crossisepsurfacelem}
Let $F$ be a closed surface and $S \subset F \times (a,b)$ a compact, closed, embedded, two-sided surface (not necessarily connected) that separates $F \times \{0\}$ from $F \times \{1\}$.  Then there is a sequence of surfaces $S = S_0,S_1,\dots,S_n$ such that each $S_{i+1}$ results from compressing $S_i$ across a disk in $F \times [a,b]$ and $S_n$ is a collection of spheres and one or more horizontal surfaces isotopic to $F \times \{c\}$ for some $c \in (a,b)$.
\end{Lem}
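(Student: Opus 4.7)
My plan is to take a maximal sequence of essential compressions of $S$ inside $F \times [a,b]$ and argue that the terminal surface must consist of spheres together with at least one horizontal copy of $F$. The two key inputs are the standard classification of closed, two-sided, incompressible surfaces in a product, and the fact that an essential compression preserves the property of separating the two ends of $F \times [a,b]$.

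First I would define a complexity on properly embedded closed surfaces in $F \times [a,b]$, for instance the lexicographically ordered tuple $(-\chi(T_1), -\chi(T_2), \ldots)$ of Euler characteristics of non-sphere components listed in non-increasing order. Any compression along a disk with essential boundary strictly decreases this complexity, since the boundary of the disk is either non-separating on a component (reducing its genus by one) or separating (splitting a component into two of strictly smaller genus). Hence any sequence of essential compressions terminates in finitely many steps. Let $S = S_0, S_1, \ldots, S_n$ be a maximal such sequence. By maximality, every non-sphere component of $S_n$ is incompressible in $F \times [a,b]$.

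Next, I would invoke the classification of closed, two-sided, incompressible surfaces in a product: every such surface embedded in $F \times [a,b]$ is isotopic to a horizontal level $F \times \{c\}$. This is a standard application of Waldhausen's work on surfaces in surface bundles, and can be proved directly by placing the surface in general position with respect to the projection onto $[a,b]$ and performing innermost disk and outermost arc exchanges, using irreducibility of $F \times [a,b]$ when $F$ has positive genus. Applied to $S_n$, this shows every non-sphere component of $S_n$ is horizontal.

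Finally, I would check that at least one horizontal component appears. Compression along a disk in the complement of $S_i$ is a local surgery that does not change the $\mathbf{Z}/2$-homology class of the surface, so some subcollection of components of each $S_i$ continues to separate $F \times \{0\}$ from $F \times \{1\}$. A disjoint union of spheres in $F \times [a,b]$ bounds a disjoint union of balls (the product is irreducible when $F$ has positive genus, and separating $2$-spheres in $S^2 \times I$ bound balls on one side) and therefore cannot separate the two ends. Hence $S_n$ must contain at least one horizontal copy of $F$. The step I anticipate to be most delicate is invoking the classification of incompressible surfaces in a product cleanly, particularly the degenerate case $F = S^2$ where every horizontal fiber is itself a sphere; once that is in hand, the rest of the argument is routine maximal-compression bookkeeping.
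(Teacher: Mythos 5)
Your argument follows the paper's proof: compress $S$ along essential disks in $F\times[a,b]$ until it is incompressible, invoke the classification of closed two-sided incompressible surfaces in a product to conclude the non-sphere components are horizontal copies of $F$, and use the fact that a collection of spheres cannot separate the two ends to show at least one horizontal component survives. The extra bookkeeping you supply (the Euler-characteristic complexity to guarantee termination and the $\mathbf{Z}/2$-homology observation to see that the separating property persists) is sound; the paper handles the latter more directly by noting the compressing disks are disjoint from $F\times\{a\}$ and $F\times\{b\}$, and the degenerate case $F=S^2$ you flag does not arise in the intended application, where $F$ is a level surface of genus at least two.
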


\begin{proof}
Write $S_0 = S$.  If $S_0$ is compressible in $F \times [a,b]$ then let $S_1$ be the result of compressing $S$ along an essential disk contained in $F \times [a,b]$.  Because the compression is disjoint from $F \times \{a\}$ and $F \times \{b\}$, the surface $S_1$ also separates $F \times \{a\}$ and $F \times \{b\}$.  We can repeat the process until we produce an incompressible surface $S_n$ that separates $F \times \{a\}$ and $F \times \{b\}$.

The only closed incompressible surfaces in $F \times [-1,1]$ are spheres and surfaces isotopic to $F \times \{c\}$ for some $c \in (a,b)$.  Thus if $S_n$ is incompressible then each non-sphere component of $S_n$ is isotopic to some $F \times \{c\}$.  If $S_n$ is a collection of spheres then $S_n$ cannot separate $F \times \{a\}$ from $F \times \{b\}$.  Thus $S_n$ has at least one component isotopic to $F \times \{c\}$.  
\end{proof}

\begin{Lem}
\label{onesthreetslem}
If  $g$ spans $f$ positively and $g$ spans $f$ negatively then there is a value $s \in [-1,1]$ and values $t_- < t_0 < t_+ \in [-1,1]$ such that either 
\begin{enumerate}
\item $\Sigma_{t_-}$ and $\Sigma_{t_+}$ are mostly above $\Sigma'_s$ while $\Sigma_{t_0}$ is mostly below $\Sigma'_s$ or 
\item $\Sigma_{t_-}$ and $\Sigma_{t_+}$ are mostly below $\Sigma'_s$ while $\Sigma_{t_0}$ is mostly above $\Sigma'_s$
\end{enumerate}
\end{Lem}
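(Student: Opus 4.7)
The plan is to translate the statement into a purely combinatorial one about two real-valued functions $a$ and $b$ on $(-1,1)$, and then to settle it by a short case analysis on the order type of the witnesses.

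First, for each $t \in (-1,1)$, let $a(t), b(t) \in [-1,1]$ be the values identified in the proof of Lemma~\ref{regionsboundedlem}, so that $(t,s) \in R_a$ iff $s < a(t)$ and $(t,s) \in R_b$ iff $s > b(t)$.  A preliminary observation is that $a(t) \leq b(t)$ for every $t$: if $\Sigma_t$ were simultaneously mostly above and mostly below some $\Sigma'_s$, then every component of $\Sigma_t \setminus \Sigma'_s$ would be planar and each intersection circle would bound a disk in $\Sigma_t$, which is impossible for a closed surface of positive genus (the pieces cannot be reassembled into a positive-genus surface by gluing along inessential circles).

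Next, positive spanning becomes the assertion that some $p_1 < p_2$ satisfy $b(p_1) < a(p_2)$, negative spanning becomes the assertion that some $n_1 < n_2$ satisfy $b(n_2) < a(n_1)$, and the desired conclusion is the existence of $t_- < t_0 < t_+$ and $s$ realizing either
\[
b(t_0) < s < \min\{a(t_-), a(t_+)\} \quad (\text{Case 1}) \qquad\text{or}\qquad \max\{b(t_-), b(t_+)\} < s < a(t_0) \quad (\text{Case 2}).
\]
I then run a case analysis on the relative order of the four witnesses $p_1, p_2, n_1, n_2$ (six orderings, given $p_1 < p_2$ and $n_1 < n_2$).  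In each ordering I select one or two triples drawn from these four values and show that denying both Case 1 and Case 2 at the selected triples produces a cyclic chain of inequalities that collapses via $a \leq b$, (P), and (N).  As a representative computation, in the nested ordering $p_1 < n_1 < n_2 < p_2$ the failure of Case 2 at $(p_1, n_1, n_2)$, combined with $a(n_1) > b(n_2)$ from (N), forces $a(n_1) \leq b(p_1)$; the failure of Case 1 at $(n_1, n_2, p_2)$, again combined with (N), forces $a(p_2) \leq b(n_2)$.  Concatenating gives $b(p_1) \geq a(n_1) > b(n_2) \geq a(p_2) > b(p_1)$, a contradiction.

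The main obstacle is organizational rather than conceptual.  Six orderings must be dispatched one by one, and within each the correct pair of triples has to be chosen so that the negations interact cleanly through $a \leq b$ to close a loop of the form $x < x$.  A small generic perturbation of $f \times g$ preserving stability rules out the degenerate case in which two of the four witnesses coincide, so the argument requires no additional input beyond the translation into the $(a,b)$ inequalities and this uniform cycle-closing trick.
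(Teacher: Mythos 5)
Your proposal is correct, and it is essentially the same underlying argument as the paper's, repackaged. Both proofs rest on exactly one geometric fact: the "mostly above"/"mostly below" conditions are monotone in $s$, so that for each $t$ there are thresholds $a(t)$ and $b(t)$ with $\Sigma_t$ mostly above $\Sigma'_s$ iff $s<a(t)$ and mostly below iff $s>b(t)$. You make this explicit; the paper uses the same monotonicity in the form ``if $s^p \geq s^n$ and $\Sigma_t$ is mostly below $\Sigma'_{s^n}$, then it is mostly below $\Sigma'_{s^p}$.'' After that, both proofs are a finite case analysis on the order type of the four witness $t$-values. The paper compresses the analysis by a WLOG symmetry reduction and then directly exhibits the triple and the value of $s$; you instead run through all six orderings and, in each, close a cyclic chain of inequalities to a contradiction of the form $x<x$. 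I verified that the cycle-closing strategy works in all six orderings, and in fact a single uniform pattern suffices: in every ordering, two of the four triples, each assigned the ``obvious'' case, already give a contradiction from (P) and (N) alone. Notably, your preliminary observation $a(t)\leq b(t)$ turns out to be unnecessary for closing the cycles in any of the six orderings, so you could drop it entirely, along with the slightly delicate topological justification (which, as stated in terms of ``positive genus,'' is imprecise; the paper is careful that the torus is a borderline case, though that borderline does not arise for a single fixed $t$). One other small point: you do not need to perturb $f\times g$ to separate coinciding witnesses; since $R_a$ and $R_b$ are open by Lemma~\ref{regionsboundedlem}, one can simply nudge the $t$-witnesses inside the open regions to make all four distinct. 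Overall: a correct proof, more systematic and mechanical than the paper's but built on the same idea; what it buys is a self-checking inequality formalism, at the cost of more cases than the paper's WLOG-compressed version.
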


This Lemma should seem obvious from the graphic shown in the bottom right corner of Figure~\ref{spanningfig} for a sweep-out spanning in both directions.  Nonetheless, we will provide a proof just to be safe.

\begin{proof}
Because $g$ spans $f$ positively, there are values $s^p$ and $t^p_- < t^p_+$ such that $\Sigma_{t^p_-}$ is mostly below $\Sigma'_{s^p}$ while $\Sigma_{t^p_+}$ is mostly above $\Sigma'_{s^p}$.  Since $g$ spans $f$ negatively, there are values $s^n$ and $t^n_- > t^n_+$ such that $\Sigma_{t^n_-}$ is mostly below $\Sigma'_{s^n}$ while $\Sigma_{t^n_+}$ is mostly above $\Sigma'_{s^n}$.  It is either the case that $t^p_+$ or $t^n_+$ is between $t^p_-$ and $t^n_-$ or vice versa.  Without loss of generality, assume $t^p_+$ is between $t^p_-$ and $t^n_-$, i.e. $t^p_- < t^p_+ < t^n_-$.

First assume $s_p \geq s_n$.  Since $\Sigma_{t^n_-}$ is mostly below $\Sigma'_{s^n}$, it is also mostly below $\Sigma'_{s^p}$.  Thus the values $s = s^p$, $t_- = t^p_-$, $t_0 = t^p_+$ and $t_+ = t^n_+$ satisfy the criteria for the lemma.

Otherwise, assume $s^p < s^n$.  If $t^n_+ > t^p_-$ then $t^p_- < t^n_+ < t^n_-$ and we have the previous case, but with $p$ and $n$ reversed.  As in the previous case, there are values that satisfy the lemma.  Otherwise, we have $t^n_+ < t^p_- < t^p_+ < t^n_-$.  Focussing on $t^n_+ < t^p_- < t^p_+$, we have the first case, but with $+$ and $-$ reversed.  We can again find values that satisfy the lemma.
\end{proof}

\begin{proof}[Proof of Lemma~\ref{bothfacinglem}]
We will describe the case when $g$ spans $f$ both positively and negatively.  The case when $g$ spans $f$ with just one sign follows along the same lines, but is simpler and will thus be left for the reader.

Assume $g$ spans $f$ both positively and negatively.  By Lemma~\ref{onesthreetslem}, there are values $s$ and $t_- < t_0 < t_+ \in [-1,1]$ such that $\Sigma_{t_-}$ and $\Sigma_{t_+}$ are mostly above $\Sigma'_s$ while $\Sigma_{t_0}$ is mostly below $\Sigma'_s$, or vice versa.  Without loss of generality, assume $\Sigma_{t_-}$ and $\Sigma_{t_+}$ are mostly above $\Sigma'_s$.  

The surface $\Sigma'_s$ intersects each of the surfaces $\Sigma_{t_-}$, $\Sigma_{t_0}$, $\Sigma_{t_+}$ in loops that are trivial in the respective level surfaces of $f$.  Let $S$ be the result of isotoping $\Sigma'_s$ to remove any intersections that are trivial in both surfaces, and compressing $\Sigma'_s$ along innermost disks in $\Sigma_{t_-}$, $\Sigma_{t_0}$, $\Sigma_{t_+}$ whose boundaries are essential in $\Sigma'_s$ until $S$ is disjoint from $\Sigma_{t_-}$, $\Sigma_{t_0}$ and $\Sigma_{t_+}$.  Define $\Sigma = S_0,S_1,\dots,S_n = S$ to be the sequence of surfaces defined by these compressions.

Define $M^-_0 = H^-$ and $M^+_0 = H^+$.  After each compression, we can split the components of $M \setminus S_{i+1}$ into sets $M^-_{i+1}$ and $M^+_{i+1}$ in a unique way induced from the labeling for the complement of $S_i$.  That is, we will let $M^-_{i+1}$ be the union of the components consisting of $M^-_i$ plus or minus the neighborhood of $D_i$, and let $M^+_{i+1}$ be the union of $M^+_i$ plus or minus the neighborhood of $D_i$.  

After all the compressions, the surfaces $\Sigma_{t_-}$ and $\Sigma_{t_+}$ are contained in $M^+_n$ while $\Sigma_{t_0}$ is in $M^-_n$.  Thus the intersection of $S$ with $f^{-1}([t_-,t_0])$ separates $\Sigma_{t_-}$ from $\Sigma_{t_0}$ and the intersection of $S$ with $f^{-1}([t_0,t_+])$ separates $\Sigma_{t_0}$ from $\Sigma_{t_+}$.

By Lemma~\ref{crossisepsurfacelem}, we can compress $S$ further to a surface $S'$ whose intersection with $f^{-1}([t_-,t_+])$ is a union of spheres and at least two components isotopic to $\Sigma_{t_0}$.  By Lemma~\ref{compresstoamalglem}, $\Sigma'$ is an amalgamation along this $S'$.  If $S'$ consists exactly of two copies of $\Sigma$ then we're done.  

Otherwise, let $S'' \subset S'$ be the union of two components isotopic to $\Sigma$.  This $S''$ is separating and the generalized Heegaard splitting we constructed for $S'$ induces a generalized Heegaard splitting for each component of $M \setminus S''$.  Amalgamate these generalized Heegaard splittings to form a Heegaard splitting for each component of $M \setminus S''$.  Since $(\Sigma', H'^-, H'^+)$ is an amalgamation of the original generalized splitting, it is also an amalgamation of these new Heegaard splittings along the two copies of $\Sigma$ forming $S''$.

We can calculate the bound $k' \geq 2k$ by noting that a sequence of compressions turned $\Sigma'$ into the surface $S'$ containing two or more genus $k$ surfaces, so $\Sigma'$ must have genus at least $2k$.
\end{proof}

\section{Splitting sweep-outs}
\label{boundsect}

Let $f$ and $g$ be sweep-outs for a 3-manifold $M$ and assume $f \times g$ is generic.  As above, let $R_a \subset [-1,1] \times [-1,1]$ be the set of values $(t,s)$ such that $\Sigma_t$ is mostly above $\Sigma'_s$.  Let $R_b \subset [-1,1] \times [-1,1]$ be the set of values $(t,s)$ such that $\Sigma_t$ is mostly below $\Sigma'_s$.  

\begin{Def}
If $f \times g$ is generic and no arc $[-1,1] \times \{s\} \subset [-1,1] \times [-1,1]$ passes through both $R_a$ and $R_b$ then we will say that $g$ \textit{splits} $f$.  If two Heegaard splittings $(\Sigma, H^-, H^+)$ and $(\Sigma', H'^-, H'^+)$ for $M$ are represented by sweep-outs $f$ and $g$, respectively such that $g$ splits $f$ then we will say that $\Sigma'$ splits $\Sigma$.
\end{Def}

Note that by the definitions of spanning and splitting, if $f \times g$ is generic then either $g$ spans $f$ or $g$ splits $f$.  A picture of the graphic for a pair of splitting sweep-outs is shown in the bottom left corner of Figure~\ref{spanningfig}.  Let $k$ and $k'$ be the genera of $\Sigma$ and $\Sigma'$, respectively.  Recall that $d(\Sigma)$ is the Hempel distance of the Heegaard splitting $(\Sigma, H^-, H^+)$.  In this section we will prove the following:

\begin{Lem}
\label{neitherfacinglem}
If $\Sigma'$ splits $\Sigma$ then $k' \geq \frac{1}{2} d(\Sigma)$.
\end{Lem}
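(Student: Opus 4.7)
The bound $k' \geq \tfrac{1}{2}d(\Sigma)$ is equivalent to $d(\Sigma) \leq 2k'$, so the plan is to construct a path in the curve complex $C(\Sigma)$ from a meridian of $H^-$ to a meridian of $H^+$ of length at most $2k'$. The essential curves in this path will be extracted from the intersection of the level surfaces $\Sigma_t$ with a carefully chosen level $\Sigma'_{s^*}$ of $g$, and the path will be traced out by varying $t$ from just above $-1$ to just below $+1$. First, note that since $R_a$ and $R_b$ are each bounded by arcs of the graphic (Lemma~\ref{regionsboundedlem}), they are open in $(-1,1)\times(-1,1)$. The set of $s$ whose horizontal arc meets $R_a$ and the set of $s$ whose arc meets $R_b$ are therefore disjoint open subsets of $[-1,1]$ (disjoint by the splitting hypothesis); since both are nonempty (near $-1$ and $1$ respectively), connectedness of $[-1,1]$ produces a value $s^* \in (-1,1)$ such that the arc $[-1,1] \times \{s^*\}$ intersects neither $R_a$ nor $R_b$. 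Thus for every generic $t \in (-1,1)$, $\Sigma_t$ is neither mostly above nor mostly below $\Sigma'_{s^*}$.

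\textbf{Extracting disjoint essential curves.} For each generic $t$, let $F^-_t = \Sigma_t \cap H'^-_{s^*}$ and $F^+_t = \Sigma_t \cap H'^+_{s^*}$. By our choice of $s^*$, each of these has a component not contained in any disk of $\Sigma_t$, so each contains an essential simple closed curve on $\Sigma_t$; call them $\ell^-_t \subset F^-_t$ and $\ell^+_t \subset F^+_t$. Since $F^-_t$ and $F^+_t$ are the two sides of $\Sigma_t$ cut by $\Sigma_t \cap \Sigma'_{s^*}$, the curves $\ell^-_t$ and $\ell^+_t$ are disjoint, and the isotopy from $\Sigma_t$ to $\Sigma$ supplied by $f$ gives disjoint essential curves on $\Sigma$, so $d(\ell^-_t,\ell^+_t) \leq 1$. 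For $t$ close to $-1$, the surface $\Sigma_t$ is the boundary of a thin regular neighborhood of the spine $K^-$ of $H^-$, and $F^-_t$ approximates the boundary neighborhood of the subgraph $K^- \cap g^{-1}([-1,s^*])$. The requirement that $F^-_t$ is not contained in disks of $\Sigma_t$ forces this subgraph to contain a cycle, and a small meridian loop around one of its edges is simultaneously an essential loop in $F^-_t$ and a meridian of $H^-$; we choose $\ell^-_t$ to be such a meridian. The same argument applied near $t = 1$ with the spine $K^+$ of $H^+$ shows that $\ell^-_t$ can be taken to bound a disk in $H^+$ for $t$ close to $+1$.

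\textbf{Bounding the path length.} As $t$ increases, the topology of $F^-_t$ only changes when $t$ passes a value at which an edge of the graphic crosses the line $y = s^*$; between such crossings $\ell^-_t$ can be chosen constant up to isotopy on $\Sigma$. At each crossing, $F^-_t$ is modified by a single 1-handle attachment or removal, and one can choose the new $\ell^-_t$ disjoint from the cocore (or core) of this handle, so that successive curves in the sequence remain disjoint on $\Sigma$ and differ by distance at most one in $C(\Sigma)$. A Hartshorn-style bookkeeping argument, tracking the Euler complexity of $\Sigma'_{s^*}$ as it relates to the number of essential surgeries that genuinely change $\ell^-_t$ up to isotopy, produces a path of length at most $2k'$, giving $d(\Sigma) \leq 2k'$. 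The main obstacle is precisely this counting: the raw number of edge crossings of $y=s^*$ is not bounded by $k'$, and one needs to argue that the curves $\ell^-_t$ remain isotopic across crossings that do not affect essential topology, with the total number of effective changes controlled by the genus of $\Sigma'_{s^*}$ rather than by the number of critical values of $g|_{\Sigma_t}$.
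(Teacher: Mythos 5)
Your setup is sound and matches the paper's opening moves: the existence of a level $s^*$ missing both $R_a$ and $R_b$, the observation that for each $t$ the surface $\Sigma_t$ is neither mostly above nor mostly below $\Sigma'_{s^*}$, and the plan to build a short path in $C(\Sigma)$ from a meridian of $H^-$ to a meridian of $H^+$. But you explicitly leave the core of the proof as an acknowledged gap: the ``Hartshorn-style bookkeeping argument'' you invoke is precisely the hard content of the lemma, and you correctly note that the naive count (number of edge-crossings of $[-1,1]\times\{s^*\}$) is not bounded by $k'$. Saying that ``one needs to argue that the curves $\ell^-_t$ remain isotopic across crossings that do not affect essential topology'' names the obstacle but does not overcome it.

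The paper's resolution is a specific construction that your sketch does not supply. Rather than tracking a chosen essential curve $\ell^-_t$ in the interior of $F^-_t = \Sigma_t \cap H'^-_{s^*}$, the paper works with the \emph{intersection} curves $\Sigma_t \cap \Sigma'_{s^*}$, which are simultaneously level loops of the Morse function $f|_{\Sigma'_{s^*}}$. It first uses a Bachman--Schleimer argument to find a subinterval $[a,b]$ on which every such loop that is trivial in $\Sigma'$ is also trivial in $\Sigma_t$, then isotopes $\Sigma'$ to $\Sigma''$ so that $S = \Sigma'' \cap f^{-1}([a',b'])$ has essential boundary. The level loops of $f|_S$ give a pair-of-pants decomposition of $S$; the number of pairs of pants is at most $-\chi(S) \leq -\chi(\Sigma'') = 2k'-2$; and projecting the cuff loops into $\Sigma_0$ yields a connected set $L'$ of diameter at most $2k'-2$ in $C(\Sigma)$. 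The distance-one jumps of your sketch are realized as cuffs of a single pair of pants, and the ``effective changes'' you worry about are exactly the essential saddles of $f|_S$, counted by Euler characteristic of the subsurface of $\Sigma''$ rather than by graphic crossings. Without this surface-level viewpoint (or an equivalent one) there is no bound, so as written the argument does not prove the lemma.

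A secondary issue: your curves $\ell^\pm_t$ live in the interiors of $F^\pm_t$, which makes it unclear how to compare $\ell^-_t$ for adjacent $t$ even across a single handle attachment, since there may be many choices of essential curve in $F^-_t$ and no canonical continuation. The paper avoids this by using the boundary curves of the $F^\pm_t$ (i.e.\ $\Sigma_t \cap \Sigma'_{s^*}$); Lemma~\ref{nicesmorselem} shows at least one of these is essential in $\Sigma_t$, and the pair-of-pants structure of $S$ gives the comparison between nearby levels for free.
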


It may be easier to think of the inequality $k' \geq \frac{1}{2} d(\Sigma)$ as $d(\Sigma) \leq 2k'$.  This is more reminiscent of the inequality found by Scharlemann and Tomova~\cite{tom:dist}, and comes from a very similar argument.  In fact, combining  Lemma~\ref{bothfacinglem} and Lemma~\ref{neitherfacinglem} provides a new proof of the Heegaard splitting case of Scharlemann and Tomova's theorem.

\begin{Coro}[Scharlemann and Tomova~\cite{tom:dist}]
\label{stcoro}
Let $\Sigma$ and $\Sigma'$ be Heegaard surfaces in the same 3-manifold.  Let $k'$ be the genus of $\Sigma'$.  Then either $k' \geq \frac{1}{2} d(\Sigma)$, $\Sigma'$ is a stabilization $\Sigma$ or $\Sigma'$ is a stabilization of a boundary stabilization of $\Sigma$.
\end{Coro}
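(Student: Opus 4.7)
The plan is to combine the spanning/splitting dichotomy recorded at the start of this section with the two preceding lemmas, reading Lemma~\ref{neitherfacinglem} in the splitting case and Lemma~\ref{bothfacinglem} in the spanning case. Given Heegaard splittings $(\Sigma, H^-, H^+)$ and $(\Sigma', H'^-, H'^+)$ of $M$, I would first choose sweep-outs $f$ and $g$ representing them and, using Kobayashi's theorem followed by a small perturbation, arrange that $f \times g$ is generic. Reducible $M$ may be disposed of at the outset: Haken's lemma forces $d(\Sigma) = 0$, so the inequality $k' \geq \frac{1}{2} d(\Sigma)$ is automatic, and I may assume $M$ is irreducible.

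Since $f \times g$ is generic, either $g$ splits $f$ or $g$ spans $f$. In the splitting case, Lemma~\ref{neitherfacinglem} gives $k' \geq \frac{1}{2} d(\Sigma)$ directly. In the spanning case, Lemma~\ref{bothfacinglem} presents $\Sigma'$ as an amalgamation along a separating surface which is either a single copy of $\Sigma$ (one-sign spanning) or a union of two parallel copies of $\Sigma$ (two-sign spanning). In either subcase, applying the classification of Heegaard splittings of compression bodies~\cite[Corollary 2.12]{st:crossi} invoked just before Lemma~\ref{compresstoamalglem} identifies $\Sigma'$ as a stabilization of $\Sigma$ or of a boundary stabilization of $\Sigma$, which is the conclusion of the corollary.

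The main point of care is the two-sign subcase: there $\Sigma'$ is produced as an amalgamation along two parallel copies of $\Sigma$, and to apply the cited compression-body classification I first have to collapse this pair of copies to a single copy. I would do this by amalgamating the generalized Heegaard splitting of the $\Sigma \times I$ region bounded by the two copies into a genuine Heegaard splitting of that region, leaving an amalgamation along a single copy of $\Sigma$ to which the classification applies. Everything else --- the generic perturbation of $f \times g$, the disposal of reducible $M$, and the final identification of the amalgamated splitting with a (possibly boundary) stabilization --- is routine given the results already assembled in this section and in Section~\ref{facingsect}.
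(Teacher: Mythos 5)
Your argument is the same as the paper's: put $f \times g$ in generic position, use the splitting/spanning dichotomy, and apply Lemma~\ref{neitherfacinglem} in the splitting case and Lemma~\ref{bothfacinglem} together with~\cite[Corollary 2.12]{st:crossi} in the spanning case. Your explicit disposal of reducible $M$ is a small but welcome addition (Lemma~\ref{bothfacinglem} assumes irreducibility, which the paper's proof leaves implicit), whereas the two-copy collapse in your last paragraph is unnecessary: the first sentence of Lemma~\ref{bothfacinglem} already exhibits $\Sigma'$ as an amalgamation along a single copy of $\Sigma$ whenever $g$ spans $f$, with or without both signs, so~\cite[Corollary 2.12]{st:crossi} applies directly.
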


\begin{proof}
Let $f$ and $g$ be sweep-outs for $\Sigma$ and $\Sigma'$, respectively.  Isotope $f$ and $g$ so that $f \times g$ is generic.  Then either $g$ splits $f$, in which case by Lemma~\ref{neitherfacinglem}, $k' \geq \frac{1}{2} d(\Sigma)$, or $g$ spans $f$, in which case by Lemma~\ref{bothfacinglem}, $\Sigma'$ is an amalgamation along $\Sigma$.  By the classification of Heegaard splittings of compression bodies~\cite[Corollary 2.12]{st:crossi}, every amalgamation along $\Sigma$ is either a stabilization $\Sigma$ or a stabilization of a boundary stabilization of $\Sigma$.
\end{proof}

As pointed out in~\cite{inflects}, a horizontal tangency in the graphic corresponds to a critical point in the function $g$.  Since $g$ is a sweep-out, it has no critical points away from its spines, so there can be no horizontal tangencies in the interior of the graphic.  Thus the maxima of the upper boundary of $R_b$ and minima of the lower boundary of $R_a$ are vertices of the graphic.  Let $C$ be the complement in $\{0\} \times (-1,1)$ of the projections of $R_a$ and $R_b$.  This is a (possibly empty) closed interval.  Because $f \times g$ is generic, if $C$ is a single point, $C = \{s\}$, then the arc $[-1,1] \times \{s\}$ must pass through a vertex of the graphic that is a maximum of $\bar R_a$ and a mimimum of $\bar R_b$.  Let $(t, s)$ be the coordinates of this vertex.  

For arbitrarily small $\epsilon$, the restriction of $g$ to $\Sigma_{t+\epsilon}$ is a Morse function.  Moreover, there are two consecutive critical points in the restriction such that each component of the subsurface below any level set below the first saddle is contained in a disk while each component of any subsurface above a level set above the second saddle is contained in a disk.  This is only possible in a torus.  

Since we assumed $\Sigma$ has genus at least two, the set $C$ must have more than one point.  Since there are finitely many vertices in the graphic and infinitely many points in $C$, there is an $s \in C$ such that the arc $[-1,1] \times \{s\}$ does not pass through a vertex of the graphic.  

\begin{Lem}
\label{nicesmorselem}
If $g$ splits $f$ then there is an $s$ such that $[-1,1] \times \{s\}$ is disjoint from $R_a$ and $R_b$ and the restriction of $f$ to $\Sigma'_s$ is a Morse function such that each level set in $\Sigma'_s$ contains a loop that is essential in the corresponding level set of $f$.
\end{Lem}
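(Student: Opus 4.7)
The plan is to build on the construction set up in the paragraphs immediately preceding the lemma. That discussion already produces, using the hypothesis $k\geq 2$ and a counting argument against the finitely many vertices of the graphic, a value $s\in C$ such that the horizontal arc $[-1,1]\times\{s\}$ passes through no vertex. By the definition of $C$ this arc is disjoint from $R_a$ and $R_b$, and by the genericity of $f\times g$ together with the avoidance of vertices, the restriction $f|_{\Sigma'_s}$ is Morse away from $\pm 1$. What remains is the essential-loop condition.

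For a regular value $t$ of $f|_{\Sigma'_s}$, I would argue by contradiction: suppose every component of $\Sigma'_s\cap\Sigma_t$ is trivial in $\Sigma_t$. Then at most one component of $\Sigma_t\setminus\Sigma'_s$ can fail to be contained in a disk of $\Sigma_t$, since two such components would each have to absorb all the handles. If no such ``main'' component exists then every piece of both $\Sigma_t\cap H'^-_s$ and $\Sigma_t\cap H'^+_s$ lies in a disk of $\Sigma_t$, forcing $(t,s)\in R_a\cap R_b$. If the main component lies in $H'^-_s$ then $\Sigma_t\cap H'^+_s$ is a union of disk-subsets, putting $(t,s)\in R_b$; symmetrically $(t,s)\in R_a$ in the other case. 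All three alternatives contradict $s\in C$, so some component of $\Sigma'_s\cap\Sigma_t$ must be essential in $\Sigma_t$.

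The main obstacle is extending this to critical values $t_c$ of $f|_{\Sigma'_s}$, where the level set in $\Sigma'_s$ contains a figure-8 (at a saddle) or an isolated point (at an extremum) and is no longer a 1-manifold. Extrema cause no real difficulty since the isolated point may simply be ignored and the argument above applies to the remaining regular loops. At a saddle I would argue by continuity: by the regular case there is an essential loop $\gamma_\epsilon$ in the level set at $t_c+\epsilon$, and either $\gamma_\epsilon$ lies outside a small fixed neighborhood of the saddle point and hence persists unchanged as an essential embedded loop at level $t_c$, or it enters a shrinking neighborhood of the saddle. In the latter case, as $\epsilon\to 0^+$ the free homotopy class $[\gamma_\epsilon]\in\pi_1(\Sigma_{t_c})$ becomes a product (in some order) of the classes of the two embedded circles $\beta_1,\beta_2$ of the figure-8, so nontriviality of $[\gamma_\epsilon]$ forces at least one $[\beta_i]$ to be nontrivial. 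Either way the level set at $t_c$ contains an embedded essential loop, completing the proof.
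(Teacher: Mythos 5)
Your argument for regular levels is exactly the paper's: avoiding vertices and the projections of $R_a$, $R_b$ by the genus and finiteness argument, then observing that if every intersection loop were trivial in $\Sigma_t$ the unique non-disk complementary component would force $(t,s)$ into $R_a$ or $R_b$. The paper's proof is terser and does not separately treat critical levels; your added limiting argument at saddles and extrema is a reasonable (and correct modulo minor imprecision about whether $[\gamma_\epsilon]$ is a product of both $[\beta_i]$'s or just one of them) extra step, but note that the lemma is only ever invoked at regular values of $f|_{S}$ in the proof of Lemma~\ref{neitherfacinglem}, so the paper can afford to be silent there.
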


\begin{proof}
As above, we can choose $s$ such that $[-1,1] \times \{s\}$ is disjoint from the vertices of the graphic and from $R_a$ and $R_b$.  The restriction of $f$ to $\Sigma'_s$ is Morse because $[-1,1] \times \{s\}$ does not pass through any vertices of the graphic.  Each level set of the restriction is a collection of level sets in some $\Sigma_t$ that bound the intersection of $\Sigma_t$ with $H'^-_s$ and with $H'^+_s$.  Since $\Sigma_t$ is neither mostly above nor mostly below $\Sigma'_s$, these loops cannot all be trivial in $\Sigma_t$.  Thus the level set contains a loop that is essential in $\Sigma_t$.
\end{proof}

To simplify the notation, we will assume (by isotoping if necessary) that $\Sigma' = \Sigma_s$ for this value of $s$.

If $d(\Sigma) \leq 2$ then the Lemma follows immediately, since we assumed $\Sigma'$ has genus at least 2.  Thus we will assume $d(\Sigma) > 2$.  Bachman and Schleimer~\cite[Claims 6.3 and 6.7]{bsc:bndls} showed that in this case, there is some non-trivial interval $[a,b] \subset [-1,1]$ such that for $t \in [a,b]$, every loop of $\Sigma_t \cap \Sigma'_s$ that is trivial in $\Sigma'_s$ is trivial $\Sigma_t$.  

Let $a'$ be a regular level of $f|_{\Sigma'}$ just above $a$ and let $b'$ be a regular level just below $b$.  Since $a'$ is in the interval $(a,b)$, every component of $\Sigma' \cap \Sigma_{a'}$ that is trivial in $\Sigma'$ is trivial in $\Sigma_{a'}$.  The same is true for $\Sigma_{b'}$.

An innermost such loop in $\Sigma'$ bounds a disk disjoint from $\Sigma_{a'}$ and a second disk in $\Sigma_{a'}$.  Since $d(\Sigma) > 0$, $M$ is irreducible and the two disks cobound a ball.  Isotoping the disk in $\Sigma'$ across this ball removes the trivial intersection.  By repeating this process with respect to $\Sigma_{a'}$ and $\Sigma_{b'}$, we can produce a surface $\Sigma''$ isotopic to $\Sigma'$ such that each loop $\Sigma'' \cap \Sigma_{a'}$ and $\Sigma'' \cap \Sigma_{b'}$ is essential in $\Sigma''$.  Note that this has not changed the property that each regular level set of $f|_{\Sigma''}$ contains a loop that is essential in $\Sigma_t$.

Let $S$ be the intersection of $\Sigma''$ with $f^{-1}([a',b'])$.  Consider a projection map $\pi$ from $f^{-1}([a',b'])$ onto $\Sigma_0$.  The image of a level loop of $f|_{\Sigma'}$ under $\pi$ is a simple closed curve in $\Sigma_0$.  (Its isotopy class is well defined, even though its image depends on the choice of projection.)

\begin{Lem}
\label{parallelparallellem}
If two levels loop of $f|_{\Sigma'}$ are isotopic in $S$ then their projections are isotopic in $\Sigma_0$.
\end{Lem}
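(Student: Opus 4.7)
The plan is to convert the isotopy of $\ell_1$ and $\ell_2$ inside $S$ into a free homotopy of their projections in $\Sigma_0$ and then appeal to a classical rigidity theorem for simple closed curves in surfaces. Since $\ell_1$ and $\ell_2$ are isotopic in $S$ they are in particular freely homotopic there, and hence freely homotopic in the ambient region $f^{-1}([a',b']) \supset S$.

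Next I would use the product structure on this region. The sweep-out $f$ has no critical values in $[a',b']$---its critical points are confined to the spines $f^{-1}(\pm 1)$---so $f^{-1}([a',b'])$ is diffeomorphic to $\Sigma \times [a',b']$ with each $\Sigma_t$ a horizontal slice and $\pi$ the projection onto the first factor, composed with a fixed identification with $\Sigma_0$. In particular $\pi$ restricts to a homeomorphism on every level surface, so the images $\pi(\ell_1)$ and $\pi(\ell_2)$ are genuine simple closed curves in $\Sigma_0$; and because $\pi$ is a homotopy equivalence (it deformation retracts the product onto a single slice) it carries freely homotopic loops to freely homotopic loops, giving $\pi(\ell_1) \simeq \pi(\ell_2)$ freely in $\Sigma_0$.

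To conclude I would invoke the Baer--Epstein theorem: two freely homotopic essential simple closed curves in a compact, orientable surface are isotopic. If both $\pi(\ell_i)$ happen to be inessential they each bound a disk in $\Sigma_0$ and are isotopic through a family of small disks, while the mixed case is excluded because an essential loop and an inessential loop represent distinct free homotopy classes. The only point that requires care is extracting the product structure and the resulting homotopy-equivalence property of $\pi$; once those are in place the lemma reduces to this standard surface-topology fact, so I expect no serious obstacle.
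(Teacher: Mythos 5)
Your proof is correct and follows essentially the same route as the paper's: turn the isotopy in $S$ into a free homotopy of the projections in $\Sigma_0$, then invoke the classical fact (Baer--Epstein) that homotopic simple closed curves in a surface are isotopic. The paper realizes the homotopy by explicitly projecting the annulus in $S$ cobounded by the two disjoint level loops, whereas you observe that $\pi$ is a homotopy equivalence on the product region $f^{-1}([a',b']) \cong \Sigma \times [a',b']$; these are two phrasings of the same idea, and your version also cleanly handles the inessential case without needing the annulus.
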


\begin{proof}
Any two level loops are disjoint in $S$ so if two level loops are isotopic then they bound an annulus $A \subset S$.  The projection of $A$ into $\Sigma_0$ determines a homotopy from one boundary of the image of $A$ to the other.  Thus the projections of the two loops are homotopic in $\Sigma_0$.  Homotopic simple closed curves in surfaces are isotopic so the two projections are in fact isotopic.
\end{proof}

Let $L$ be the set of all isotopy classes of level loops of $f|_S$.  These loops determine a pair-of-pants decomposition for $S$.  We will define a map $\pi_*$ from $L$ to the disjoint union $C(\Sigma_0) \cup \{0\}$ as follows:  A representative of a loop $\ell \in L$ projects to a simple closed curve in $\Sigma_0$.  If the projection is essential then we define $\pi_*(\ell)$ to be the corresponding vertex of $C(\Sigma)$.  If the projection is trivial then we define $\pi_*(\ell) = 0$.  By Lemma~\ref{parallelparallellem}, $\pi_*$ is well defined.  

\begin{Lem}
\label{pantsdsjtlem}
If $\ell$ and $\ell'$ are cuffs of the same pair of pants in the complement $S \setminus L$ then their images in $\Sigma_0$ are isotopic to disjoint loops.
\end{Lem}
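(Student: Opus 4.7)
My plan is to analyze the Morse function $f|_P$, which has exactly one critical point, a saddle at some level $t^*$. This saddle partitions the three cuffs into two on the ``doubled'' side at a common level $t_d$ (where $f|_P$ consists of two disjoint circles) and one on the opposite ``singleton'' side at level $t_s$. If both $\ell$ and $\ell'$ happen to be the two doubled cuffs, the proof is immediate: $\ell$ and $\ell'$ are then disjoint simple closed curves already lying in the single level surface $\Sigma_{t_d}$, and since the product structure $f^{-1}([a',b']) \cong \Sigma_0 \times [a',b']$ makes $\pi|_{\Sigma_{t_d}}$ a diffeomorphism, $\pi(\ell)$ and $\pi(\ell')$ are disjoint in $\Sigma_0$.

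The substantive case is when $\ell$ and $\ell'$ lie on opposite sides of the saddle; I shall suppose $\ell'$ is the singleton cuff while $\ell$ and $\ell''$ are doubled. The plan is to exploit the pair-of-pants relation: in $P$ there is a properly embedded arc $\alpha$ from $\ell$ to $\ell''$ passing through the saddle such that $\ell'$ is isotopic in $P$ to the outer boundary component of a regular neighborhood of $\ell \cup \alpha \cup \ell''$, equivalently to the band sum of $\ell$ and $\ell''$ along $\alpha$. Before choosing $\alpha$ I would invoke Lemma~\ref{parallelparallellem} to slide the cuffs through their parallel families in $P$ so that $|t_d - t^*|$ and $|t_s - t^*|$ are arbitrarily small; this leaves the $\pi_*$-classes unchanged. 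Then in the local Morse chart $t = -x^2 + y^2$ for $f|_P$ the entire picture fits inside the chart, and I would take $\alpha$ to be the short vertical segment on the $y$-axis through the saddle joining the two doubled branches. Its projection $\pi(\alpha)$ is then an embedded arc through the saddle image $\sigma^* \in \Sigma_0$, meeting $\pi(\ell) \cup \pi(\ell'')$ only at its endpoints and, crucially, disjoint from $\pi(\ell')$, because in the local chart $\pi(\ell')$ lies on the perpendicular pair of hyperbola branches $\{x = \pm\sqrt{y^2 + |t_s|}\}$ which avoid the $y$-axis.

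Finally I would form the band sum $\hat\gamma$ of $\pi(\ell)$ and $\pi(\ell'')$ along $\pi(\alpha)$ in $\Sigma_0$; this is a simple closed curve that by construction can be pushed off $\pi(\ell)$, so it is disjoint from $\pi(\ell)$. The isotopy in $P$ from $\ell'$ to the band sum of $\ell$ and $\ell''$ along $\alpha$ projects down to a homotopy in $\Sigma_0$ from $\pi(\ell')$ to $\hat\gamma$, and since homotopic simple closed curves in a surface are isotopic, $\pi(\ell')$ is isotopic to $\hat\gamma$ and therefore to a loop disjoint from $\pi(\ell)$. The hard part will be the arc-choice step: without first pre-isotoping the cuffs toward the critical level, the projection of $\alpha$ could wander outside the local Morse chart and develop spurious intersections with $\pi(\ell')$, so the preliminary application of Lemma~\ref{parallelparallellem} is essential for confining $\pi(\alpha)$ to a neighborhood of $\sigma^*$ where the band sum behaves as expected.
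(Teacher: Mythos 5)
Your proposal identifies the right structure of the pair of pants $P$ (one saddle, two doubled cuffs at a common level, one singleton cuff on the other side) and the easy case where both cuffs in question lie at the same level is handled correctly. But the substantive case has a genuine gap, concentrated in the local-chart step.

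The issue is that the loops $\pi(\ell)$, $\pi(\ell')$, $\pi(\ell'')$ are \emph{global} simple closed curves in $\Sigma_0$, while the arc $\alpha$ is confined to a small chart around the saddle. Pre-isotoping the cuffs toward the critical level $t^*$ controls the part of $\ell'$ near the saddle \emph{in $P$}, and indeed that strand of $\pi(\ell')$ sits on the hyperbola branches and avoids the $y$-axis. But $\pi(\ell')$ is a closed curve that wraps around $\Sigma_0$, and nothing prevents \emph{other} strands of $\pi(\ell')$ (coming from parts of $\ell'$ far from the saddle in $P$) from re-entering the chart region and crossing the $y$-axis, hence crossing $\pi(\alpha)$. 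The same problem afflicts the claim that $\pi(\alpha)$ meets $\pi(\ell)\cup\pi(\ell'')$ only at its endpoints, and the implicit claim that the projection of the band sum in $P$ coincides with the band sum $\hat\gamma$ formed downstairs in $\Sigma_0$: both require $\pi$ to be injective on a neighborhood in $P$ of $\ell\cup\alpha\cup\ell''$, which you have not established and which can fail for exactly the reason above. So the ``arc-choice step'' that you flag as the hard part is not merely hard --- as written it does not go through.

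The paper sidesteps all of this by arguing globally rather than locally. It projects the whole singular fiber $E$ (the figure-eight level component of $f|_{\Sigma'}$ containing the saddle) to a graph $\pi(E)\subset\Sigma_0$ with one vertex and two edges, and observes that the projections of the nearby level loops give free homotopies from $\pi(\ell)$, $\pi(\ell')$, $\pi(\ell'')$ into $\pi(E)$. Each projection is a simple closed curve homotopic to one of the three boundary components of a regular neighborhood of $\pi(E)$; since homotopic simple closed curves in a surface are isotopic, the three projections are isotopic to those boundary components, which are pairwise disjoint. That argument never needs to control a local chart or manufacture a particular arc, and that is precisely what makes it robust. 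If you want to salvage your band-sum idea you would need to replace the local-chart disjointness claims with an argument at the level of homotopy classes, at which point you have essentially reproduced the paper's proof.
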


\begin{proof}
Let $\ell, \ell', \ell'' \in L$ be three loops bounding a pair of pants in $S \setminus L$.  There is a saddle singularity in $f|_{\Sigma'}$ contained in a level component $E$ (a graph with one vertex and two edges) such that $\ell$, $\ell'$ and $\ell''$ are isotopic to the boundary loops of a regular neighborhood of $E$.

The projection of $E$ into $\Sigma_0$ is a graph $\pi(E)$ with one vertex and two edges.  The projections of the level loops near $E$ define a homotopy from the projections of representatives of $\ell$, $\ell'$, $\ell''$ into $\pi(E)$.  Since these representatives are simple in $\Sigma_0$, they must be isotopic to the boundary components of a regular neighborhood of $\pi(E)$.  Thus $\pi_*(\ell)$ is disjoint from $\pi_*(\ell')$.
\end{proof}

Thus if $\ell$ and $\ell'$ are cuffs of the same pair of pants and their projections are essential in $\Sigma_0$ then $\pi_*(\ell)$ and $\pi_*(\ell')$ are connected by an edge in $C(\Sigma_0)$.  Define $L' = \pi_*(L) \cap C(\Sigma)$.

\begin{Lem}
\label{lconnectedlem}
The set $L'$ is connected and has diameter at most $2k' - 2$.
\end{Lem}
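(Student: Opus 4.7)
The plan is to construct an ``essential tracking path'' through the pants decomposition of $S$ and translate it into a short path in $C(\Sigma)$.

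Non-emptiness of $L'$ is immediate from Lemma~\ref{nicesmorselem}: every regular level of $f|_{\Sigma''}$ contains a loop essential in the corresponding $\Sigma_t$, giving an element of $L'$. For the diameter bound, for each regular $t \in (a',b')$ I would pick an essential representative $\ell(t) \in L_t \cap L'$, keeping $\ell(t)$ constant on each regular interval of $f|_S$ and updating only when forced. At a critical level $t_0$ whose pair of pants $P$ has cuffs $c_1, c_2, c_3$, if the old class $\ell(t_0-\epsilon)$ is not one of the cuffs being merged away we leave $\ell$ unchanged; otherwise we must pick a new $\ell(t_0+\epsilon) \in L_{t_0+\epsilon} \cap L'$, which exists by the hypothesis that every regular level has an essential loop. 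Any valid new choice is at distance at most $1$ from the old in $C(\Sigma)$: if it is $c_3$, Lemma~\ref{pantsdsjtlem} gives disjoint projections, and if it is an unaffected loop it is a level loop disjoint from the old at level $t_0-\epsilon$ and so has disjoint projection by the same lemma applied across intervals.

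For any $\ell \in L'$ appearing at a regular level $t_\ell$, the curves $\ell$ and $\ell(t_\ell)$ are disjoint essential level loops in $\Sigma_{t_\ell}$, so their projections to $\Sigma_0$ are isotopic to disjoint loops and $d_{C(\Sigma)}(\ell, \ell(t_\ell)) \leq 1$. Combining this with the update-hop bound, any two elements of $L'$ are within distance a small constant plus the number of updates to $\ell(t)$ between their levels. Each update is chargeable to a distinct pair of pants of the decomposition of $S$, so the number of updates is at most the number of pants, which equals $-\chi(S)$. To bound this, note that the loops of $\Sigma''\cap \Sigma_{a'}$ and $\Sigma''\cap \Sigma_{b'}$ are essential in $\Sigma''$ by the isotopy producing $\Sigma''$, so no component of $\Sigma'' \setminus S$ is a disk; since each such component necessarily has non-empty boundary (as $\Sigma''$ is connected and $S$ is non-empty), each has non-positive Euler characteristic. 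Additivity of $\chi$ across $\partial S$ then gives $-\chi(S) \leq -\chi(\Sigma'') = 2k' - 2$.

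The main obstacle I anticipate is the precise bookkeeping to arrive at exactly $2k'-2$ rather than an off-by-one or off-by-two bound: this likely requires choosing the tracking function to satisfy $\ell(t_\ell) = \ell$ at one endpoint and arranging $\ell(t_{\ell'}) = \ell'$ at the other via a complementary backward construction, so that the two endpoint $+1$ adjustments do not accumulate. A secondary subtlety is handling centers of $f|_S$ (if any), where the relevant piece of the decomposition is a disk rather than a pair of pants; since such disks cap off curves trivially bounded in $S$, they contribute no essential class to $L'$ and can be absorbed into the count without affecting the bound.
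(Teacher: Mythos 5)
Your outline follows the same general strategy as the paper (connectedness via tracking level sets across critical values, plus a diameter bound charged to pairs of pants), but as you yourself flag, it does not actually deliver the bound $2k'-2$. Your argument gives $d(\pi_*(\ell),\pi_*(\ell')) \leq 1 + (\text{number of updates}) + 1 \leq -\chi(S) + 2 = 2k'$, and the ``complementary backward construction'' you gesture at to eliminate the two endpoint $+1$'s is not worked out. This is a genuine gap, not just bookkeeping: the $-2$ is exactly what the paper later needs, since the proof of Lemma~\ref{neitherfacinglem} adds $1$ on each end (connecting $L'$ to the disk sets) to conclude $d(\Sigma) \leq 2k'$. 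An off-by-two here would only give $d(\Sigma) \leq 2k' + 2$, i.e.\ $k' \geq \frac{1}{2}d(\Sigma) - 1$, which is weaker than the stated theorem.

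The paper sidesteps the accumulation problem with a different device. Having shown $L'$ is connected, it takes a \emph{shortest} path $v_0,\dots,v_n$ inside $L'$ from $\pi_*(\ell)$ to $\pi_*(\ell')$, with each $v_i$ realized by a level loop $\ell_i$. By Lemma~\ref{pantsdsjtlem}, if $\ell_i$ and $\ell_j$ are cuffs of the same pair of pants then $d(v_i,v_j)\leq 1$, and minimality of the path forces $i$ and $j$ to be consecutive. Hence each pair of pants of $S\setminus L$ contributes at most one edge of the path, giving $n \leq (\text{number of pants}) \leq -\chi(S) \leq -\chi(\Sigma'') = 2k'-2$ directly, with no endpoint corrections. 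Your tracking approach is fine for connectedness, but for the sharp diameter bound you should replace it with (or supplement it by) this shortest-path argument.
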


\begin{proof}
For each regular value $t \in (a,b)$ of $f|_S$, let $L_t \subset L$ be the set of loops with representatives in $(f|_S)^{-1}(t)$.  The loops in $L_t$ are pairwise disjoint so their projections in $\Sigma_0$ are pairwise disjoint.  Moreover, the projection $L'_t = \pi_*(L_t) \cap C(\Sigma)$ contains at least one essential loop, so $L'_t$ is a non-empty simplex in $C(\Sigma_0)$.  If there are no critical points of $f|_S$ between $t$ and $t'$ then the level sets are isotopic, so $L_t = L_{t'}$ and $L'_t = L'_{t'}$.

If there is a single critical point in $f|_S$ between $t$ and $t'$ then $L_t$ may be different from $L_{t'}$.  If the critical point is a central singularity (a maximum or a minimum) then the difference between the level sets is a trivial loop in $\Sigma'$, so $L'_t = L'_{t'}$.  If the critical point is a saddle then the projections of $L_t$ are pairwise disjoint from the projections of $L_{t'}$.  Thus for any values $t, t' \in [a',b']$, there is a path in $L'$ from any vertex of $L'_t$ to any vertex in $L'_{t'}$.  Since $L'$ is the union of all the sets $\{L'_t | t \in [a',b']\}$, $L'$ is connected.

Consider loops $\ell, \ell' \in L$ whose projections are essential in $\Sigma_0$.  Since $L'$ is connected, there is a path $\pi_*(\ell) = v_0, v_1,\dots,v_n = \pi_*(\ell')$ in $C(\Sigma)$.  Assume we have chosen the shortest such path.  Each $v_i$ is the projection of a loop $\ell_i \in L$.  If $\ell_i$ and $\ell_j$ are cuffs of the same pair of pants in $S \setminus L$ then $v_i$ and $v_j$ are distance one in $C(\Sigma_0)$.  Since the path is minimal, $i$ and $j$ must be consecutive.  Thus there is at most one step in the path for each pair of pants in $S \setminus L$.  

The number of pairs of pants is at most the negative Euler characteristic of $S$.  Since $\partial S$ is essential in $\Sigma''$, the Euler characteristic of $S$ is greater (less negative) than or equal to that of $\Sigma''$.  The Euler characteristic of $\Sigma''$ is $2 - 2k'$ so the path from $\pi_*(\ell)$ to $\pi_*(\ell')$ has length at most $2k' - 2$.
\end{proof}

\begin{proof}[Proof of Lemma~\ref{neitherfacinglem}]
Assume $(\Sigma', H'^-, H'^+)$ splits $(\Sigma, H^-, H^+)$.  Let $[a, b] \subset [-1,1]$ be the largest interval such that for $t \in [a,b]$, every loop of $\Sigma_t \cap \Sigma'_s$ that is trivial in $\Sigma'_s$ is trivial $\Sigma_t$.   Let $a',b' \in [-1,1]$ be just inside $[a,b]$ as defined above.  Isotope $\Sigma'$, as described, to a surface $\Sigma''$ such that $S = \Sigma'' \cap f^{-1}([a',b'])$ has essential boundary in $\Sigma''$ and each level set $(f|_S)^{-1}(t)$ contains an essential loop in $\Sigma_t$ for $t \in [a',b']$.  

For small enough $t$, the level loops of $f|_{\Sigma''}$ bound disks in $\Sigma''$.  At least one of these loops projects to an essential loop in $\Sigma_0$ so $a > 0$.  The value $a$ is a critical level of $f|_\Sigma''$ containing a saddle singularity.  As above, the projections of the level loops before and after this essential saddle are pairwise disjoint.  

By the definition of $a$, the projection of the level loops before the saddle contain a vertex of $\mathcal{H}^-$.  The projection of the level set after $a$ is contained in $L'$ so $d(\mathcal{H}^-,L') = 1$.  A parallel argument implies $d(\mathcal{H}^+,L') = 1$.  By Lemma~\ref{lconnectedlem}, the set $L'$ of projections of level loops into $\Sigma_0$ is connected and has diameter at most $2k'-2$.  Thus $d(\Sigma) \leq 2k'$.
\end{proof}

\section{Isotopies of sweep-outs}
\label{isotopysect}

If $(\Sigma', H'^-, H'^+)$ is flippable and spans $(\Sigma, H^-, H^+)$ then it will span $(\Sigma, H^-, H^+)$ both positively and negatively.  In particular it will be represented by one sweep-out that spans a sweep-out for $\Sigma$ positively and another that spans a sweep-out for $\Sigma$ negatively.  These sweep-outs will be isotopic and we would like to understand how the graphic changes during this isotopy.

\begin{Lem}
\label{isotopesweepslem}
Let $g$ and $g'$ be sweep-outs such that $f \times g$ and $f \times g'$ are generic and $g$ is isotopic to $g'$.  Then there is a family of sweep-outs $\{g_r | r \in [0,1]\}$ such that $g = g_0$, $g' = g_1$ and for all but finitely many $r \in [0,1]$, the graphic defined by $f$ and $g_r$ is generic.  At the finitely many non-generic points, there are at most two valence two or four vertices at the same level, or one valence six vertex.
\end{Lem}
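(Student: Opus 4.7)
The plan is to apply Cerf-theoretic transversality to the one-parameter family of product maps $f \times g_r$ obtained from a smooth deformation of sweep-outs between $g$ and $g'$.

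First I would fix any smooth path $\{g_r\}_{r\in[0,1]}$ of sweep-outs with $g_0 = g$ and $g_1 = g'$; such a path exists because $g$ and $g'$ are isotopic and the condition of being a sweep-out is open in $C^\infty(M,[-1,1])$. This yields a smooth one-parameter family of product maps $f \times g_r : M \to [-1,1]^2$, which I want to perturb (fixing the endpoints) to be generic in the relevant stratified sense.

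Next I would recall the stratification of $C^\infty(M,[-1,1]^2)$ by Mather and Cerf. The top (open, dense) stratum consists of maps that are stable and whose graphics are generic in the sense of the Definition preceding the lemma. The codimension-one strata consist of exactly two kinds of maps: (a) stable maps whose graphics have a pair of valence-two or valence-four vertices sharing a common horizontal or vertical level, but are otherwise generic; and (b) maps that fail to be stable at a single point, where the failure is of the lowest-order type. Mather's classification in the dimension pair $(3,2)$ identifies these codimension-one unstable singularities as exactly those whose image in a small neighborhood is a valence-six vertex of the graphic (a butterfly/swallowtail-type resolution of three smooth tangency arcs meeting transversally). Strata of codimension at least two correspond to more serious coincidences: three vertices at the same level, two different level-coincidences occurring at the same parameter, a valence-six vertex together with a level coincidence, or a higher-order singularity such as a valence-eight vertex.

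Then I would invoke standard transversality. Since $[0,1]$ is one-dimensional, after a perturbation of the path $\{g_r\}$ rel endpoints (justified because $f\times g_0$ and $f\times g_1$ already lie in the top stratum, and because the perturbation can be chosen to preserve the sweep-out property by openness) one can arrange that the path misses every stratum of codimension two or more and meets the union of the two codimension-one strata transversally. A transverse intersection of a smooth arc with a codimension-one submanifold is a finite set of points, and at each such point the family crosses exactly one codimension-one stratum, giving either two valence-two or valence-four vertices at a single level, or a single valence-six vertex. This is exactly the conclusion of the lemma.

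The main technical obstacle is verifying that the space of sweep-outs is ``rich enough'' for the standard transversality theorem to apply: the maps $f\times g_r$ are not arbitrary smooth maps, since the first coordinate is fixed and $g_r$ must itself be a sweep-out. However, Kobayashi's result used earlier in the paper already shows that $g$ can be perturbed by an arbitrarily small amount within the space of sweep-outs to make $f\times g$ stable, so the sweep-out condition is open and the subspace of $C^\infty(M,[-1,1]^2)$ attainable by $f\times(\cdot)$ surjects onto a neighborhood of the codimension-one strata. This reduces the perturbation argument to the usual multi-jet transversality theorem, and the classification of codimension-one failures then gives the stated form of the non-generic graphics.
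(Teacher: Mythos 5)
The paper itself gives no proof of this lemma --- it simply states that the analogous result for isotopies of Morse functions is Lemma~9 of the author's earlier paper on $T^3$, and that a similar argument applies, leaving the details to the reader. Your Cerf-theoretic transversality argument is exactly the kind of proof the author has in mind, and the overall strategy (fix a path, stratify the space of product maps, perturb the path rel endpoints to be transverse to the codimension-one strata, handle the technical point that the perturbation must stay in the space of functions of the form $f \times (\text{sweep-out})$) is sound. One imprecision worth flagging: you describe the codimension-one failures of stability as ``exactly those whose image\dots is a valence-six vertex'' and call this a ``butterfly/swallowtail-type'' phenomenon, but these are two different things. A valence-six vertex is a multi-jet degeneracy (a triple point of the fold image), not a mono-germ singularity, whereas the genuine codimension-one mono-germ degeneracies for maps from a 3-manifold to a surface are lips and beaks (cusp birth/death and cusp merging); there are also other codimension-one multi-jet degeneracies such as fold-arc tangencies and a cusp image landing on a fold arc. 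A complete proof would need to enumerate all of these and either argue that they are permitted by the lemma's conclusion (e.g., by observing that a lips or beaks moment does not produce two vertices at the same level or a valence-six vertex, and so is harmless for the application in Lemma~\ref{sigmabothlem}) or explain why they can be avoided. This does not undermine your approach --- it is exactly what the cited Lemma~9 does for the one-dimensional case --- but the identification of strata needs more care than your write-up gives.
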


The analogous Lemma for isotopies of Morse functions is Lemma 9 in~\cite{me:t3} and Lemma~\ref{isotopesweepslem} can be proved by a similar argument.  We will allow the reader to work out the details.

As above, let $(\Sigma, H^-, H^+)$ and $(\Sigma', H'^-, H'^+)$ be Heegaard splittings for a 3-manifold $M$ with genera $k$ and $k'$, respectively.

\begin{Lem}
\label{sigmabothlem}
If $(\Sigma', H'^-, H'^+)$ spans $(\Sigma, H^-, H^+)$ both positively and negatively then $k' \geq \min \{\frac{1}{2} d(\Sigma), 2k\}$.
\end{Lem}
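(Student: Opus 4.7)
The plan is to promote the two separately spanning sweep-out pairs supplied by the hypothesis into a single one-parameter family, and then track how the spanning behavior can transition along that family. By hypothesis there exist sweep-outs $f_+, g_+$ representing $(\Sigma, H^-, H^+)$ and $(\Sigma', H'^-, H'^+)$ with $g_+$ spanning $f_+$ positively, and sweep-outs $f_-, g_-$ of the same two splittings with $g_-$ spanning $f_-$ negatively. Any two sweep-outs representing the same Heegaard splitting differ by an isotopy of $M$ together with edge slides on the spines, neither of which changes the level surfaces up to isotopy, so I may assume $f_+ = f_- =: f$ and that $g_+$ is isotopic through sweep-outs to $g_-$. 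Applying Lemma~\ref{isotopesweepslem} to this isotopy produces a family $\{g_r\}_{r \in [0,1]}$ with $g_0 = g_+$, $g_1 = g_-$, and $f \times g_r$ generic for all but finitely many $r$; at the exceptional parameters only the controlled degenerations of the graphic described in that lemma occur.

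For each generic $r$, $g_r$ either splits $f$ or spans $f$, and in the spanning case the sign is positive, negative, or both. If at some generic $r$ the sweep-out $g_r$ splits $f$, then Lemma~\ref{neitherfacinglem} gives $k' \geq \tfrac{1}{2} d(\Sigma)$ and we are done. Otherwise $g_r$ spans $f$ at every generic parameter. Since $g_0$ spans positively and $g_1$ spans negatively, the sign must change somewhere. The sign of spanning is determined by the horizontal order of the vertically convex regions $R_a(r)$ and $R_b(r) \subset [-1,1]^2$ produced by Lemma~\ref{regionsboundedlem}, and these regions vary continuously in $r$ except at the finitely many exceptional values, where they change only in the restricted ways allowed by Lemma~\ref{isotopesweepslem}. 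To pass from positive-only spanning to negative-only spanning without ever splitting, at some parameter $r^*$ the regions must be simultaneously arranged so that some horizontal line realizes the positive ordering while another realizes the negative ordering; that is, $g_{r^*}$ spans $f$ in both senses. Then Lemma~\ref{bothfacinglem} (with $M$ irreducible, since otherwise $d(\Sigma) = 0$ makes the inequality vacuous) yields $k' \geq 2k$. Combining the two cases gives $k' \geq \min\{\tfrac{1}{2} d(\Sigma), 2k\}$.

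The main obstacle is the continuity-and-transition step in the third paragraph: one must verify, using the restricted local graphic changes at the exceptional parameters of Lemma~\ref{isotopesweepslem} (two valence-two or valence-four vertices at a common level, or one valence-six vertex), that the sign of spanning cannot jump discontinuously from positive-only to negative-only across any single exceptional parameter without either an intervening split on one side or a both-sign configuration at the exceptional value itself. The deformation of $\bar R_a$ and $\bar R_b$ under these controlled moves is what forces exactly this dichotomy, and a careful case analysis of the permitted vertex modifications is what completes the argument.
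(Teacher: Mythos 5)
Your overall scaffolding matches the paper's: pass from the two spanning pairs to a single $f$ and a one-parameter family $\{g_r\}$ via Lemma~\ref{isotopesweepslem}, and then invoke Lemma~\ref{neitherfacinglem} at any parameter where $g_r$ splits $f$ and Lemma~\ref{bothfacinglem} at any parameter where $g_r$ spans with both signs. However, the content you defer in your last paragraph is not a routine verification; it \emph{is} the proof. You assert that to pass from positive-only to negative-only spanning, some $g_{r^*}$ must span in both senses, but you give no argument, and in fact that is not the shape of the paper's argument.

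What the paper actually does is a proof by contradiction: assume that at every generic $r$ the sweep-out $g_r$ spans with exactly one sign (and never splits). Then there is a non-generic transition value $r_0$ with $g_{r_0-\epsilon}$ spanning only positively and $g_{r_0+\epsilon}$ spanning only negatively. One then shows that at $r_0$ the graphic must have a vertex that is a maximum of the upper boundary of $\bar R_a$ and a vertex that is a minimum of the lower boundary of $\bar R_b$ at the same height. These two vertices cannot coincide: a coinciding valence-four or valence-six vertex would force a regular level surface $\Sigma_{t+\epsilon}$ to be a torus, contradicting $k \geq 2$ (this is exactly the point earlier in Section~\ref{boundsect} ruling out $C$ being a single point). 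With the two vertices distinct, the positive spanning on the $r_0 - \epsilon$ side forces one ordering of their $s$-coordinates, while the negative spanning on the $r_0 + \epsilon$ side forces the opposite ordering; since Lemma~\ref{isotopesweepslem} allows at most these two vertices at the critical level, this is a contradiction. That case analysis of valence-two, valence-four, and valence-six degenerations, together with the ordering comparison on both sides of $r_0$, is the step you have labeled "the main obstacle" and left undone. Also worth noting: you should be a bit more careful in the first reduction, as in the paper, to check that the handle slides and the isotopy identifying $f_+$ with $f_-$ can be chosen so as to preserve the fact that $g_-$ spans $f$ negatively; merely saying the two sweep-outs "differ by an isotopy and edge slides" does not by itself guarantee the spanning sign is preserved through the intermediate replacements.
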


\begin{proof}
Since $(\Sigma', H'^-, H'^+)$ spans $(\Sigma, H^-, H^+)$ both positively and negatively, there are sweep-outs $f$, $g$ representing $(\Sigma, H^-, H^+)$ and $(\Sigma', H'^-, H'^+)$, respectively, such that $g$ spans $f$ positively, as well as sweep-outs $f'$, $g'$ representing the two Heegaard splittings such that $g'$ spans $f'$ negatively.  

Since $f$ and $f'$ represent the same Heegaard splittings, there is a sequence of handle slides after which there is an isotopy taking $f'$ to $f$.  The handle slides can be done so that before the isotopy, $g'$ still spans $f'$.  By composing $g'$ with this isotopy, we can assume $g'$ spans $f$ negatively.  Because $g$ and $g'$ represent the same Heegaard splitting, they will be isotopic after an appropriate sequence of handle slides that do not change the fact that $g'$ spans $f$ negatively.

Consider a continuous family of sweep-outs $\{g_r |  r \in [0,1], g_r \in C^{\infty}(M,\mathbf{R})\}$ such that $g_0 = g$, $g_1 = g'$ and $f \times g_r$ is generic for all but finitely many $r$.  For a generic $r$, $g_r$ either spans $f$ or splits $f$.  If $g_r$ spans $f$ with both signs or splits $f$ then by Lemmas~\ref{bothfacinglem} and~\ref{neitherfacinglem}, $k' \geq \min \{\frac{1}{2} d(\Sigma), 2k\}$.  Thus away from the finitely many non-generic values, we will assume for contradiction that $g_r$ spans $f$ positively or negatively, but not both.

Since $g_0$ spans $f$ positively and $g_1$ spans $f$ negatively, there must be some value $r_0$ such that for small $\epsilon > 0$, $g_{r_0-\epsilon}$ spans $f$ positively, while $g_{r_0+\epsilon}$ spans $f$ negatively.  For every small $\epsilon > 0$, the closures of the projections of $R_a$ and $R_b$ at time $r_0 - \epsilon$ intersect in an interval $I^-_\epsilon$.  Since the projections are disjoint at time $r_0$, the limit of these intervals must contain a single point $s^-$.  Thus the graphic at time $r_0$ must have two vertices at the same level, one of which is a maximum for the upper boundary of $R_a$ and the other a minimum for the lower boundary of $R_b$, as in the middle graphic shown in Figure~\ref{changefig}.
\begin{figure}[htb]
  \begin{center}
  \includegraphics[width=3.5in]{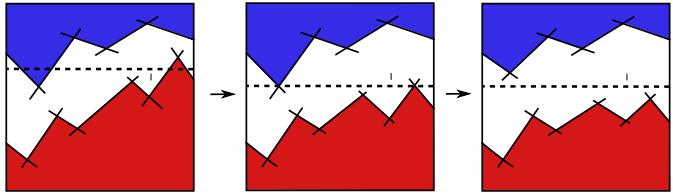}
  \caption{When the graphic goes from spanning positively to not spanning positively, there are two vertices at the same level.}
  \label{changefig}
  \end{center}
\end{figure}

If the vertices in the upper boundary of $R_a$ and the lower boundary of $R_b$ coincide, then this vertex cannot be valence four, as explained above, since $\Sigma$ is not a torus.  The same argument implies that this cannot happen at a valence six vertex either.  Since $g_{r_0 - \epsilon}$ spans $f$ positively, the $s$ coordinate of the vertex in the boundary of $R_a$ must be strictly lower than that the vertex in the boundary of $R_b$.  However, an analogous argument for the graphics at times $r_0 + \epsilon$ implies that the $s$ coordinate of the vertex in the boundary of $R_a$ must be strictly greater than that of the vertex in the boundary of $R_b$.  Since there are at most two vertices at level $s$, this is a contradiction and completes the proof.
\end{proof}

\begin{proof}[Proof of Theorem~\ref{mainthm1}]
By Lemma~\ref{withitselflem}, $(\Sigma, H^-, H^+)$ spans itself positively. By Lemma~\ref{stabfacinglem}, every stabilization of $(\Sigma, H^-, H^+)$ spans $(\Sigma, H^-, H^+)$ positively.  If $(\Sigma', H'^-, H'^+)$ is a flippable stabilization of $(\Sigma, H^-, H^+)$ then $(\Sigma', H'^-, H'^+)$ spans $(\Sigma, H^-, H^+)$ both positively and negatively and by Lemma~\ref{sigmabothlem}, $\Sigma'$ has genus greater than or equal to $\min \{\frac{1}{2} d(\Sigma), 2k\}$.
\end{proof}

\begin{proof}[Proof of Theorem~\ref{mainthm2}]
Let $(\Sigma, H^-, H^+)$ be a Heegaard splitting of a 3-manifold $M$ with a single boundary component.  Recall the convention that for any Heegaard splitting for $M$, the first compression body is a handlebody.  This implies that if $(\Sigma', H'^-, H'^+)$ is a Heegaard splitting for $M$ and $\Sigma'$ is isotopic to $\Sigma$ (as an unoriented surface) then $(\Sigma', H'^-, H'^+)$ is isotopic to $(\Sigma, H^-, H^+)$.  

As in the last proof, $(\Sigma, H^-, H^+)$ spans itself positively, as does every stabilization of $(\Sigma, H^-, H^+)$.  Let $(\Sigma', H'^-, H'^+)$ be a boundary stabilization of $(\Sigma, H^-, H^+)$.  By Lemma~\ref{bstabfacinglem}, $(\Sigma', H'^-, H'^+)$ spans $(\Sigma, H^-, H^+)$ negatively, as does every stabilization of $(\Sigma', H'^-, H'^+)$. Any common stabilization of $\Sigma$ and $\Sigma'$ spans $\Sigma$ with both signs so by Lemma~\ref{sigmabothlem}, every common stabilization has genus at least $\min \{\frac{1}{2} d(\Sigma), 2k\}$.
\end{proof}

\begin{proof}[Proof of Corollary~\ref{maincoro}]
Let $(\Sigma, H^-, H^+)$ be a Heegaard splitting of a 3-manifold $M$ with a single boundary component.  Assume $\Sigma$ has minimal genus and $d(\Sigma) > 2(k + b)$ where $k$ is the genus of $\Sigma$ and $b$ is the genus of $\partial M$.  Because $(\Sigma', H'^-, H'^+)$ is a boundary stabilization, it is weakly reducible.  Its genus is $k + b$ and the Heegaard genus of $M$ is $k$ so $\Sigma'$ is not minimal genus.  Assume for contradiction $\Sigma'$ is stabilized. 

Then $\Sigma'$ is a stabilization of a Heegaard surface $\Sigma''$ of genus strictly less than $k + b$.  By Scharlemann and Tomova's theorem~\cite{st:dist} (See also Corollary~\ref{stcoro}), every Heegaard splitting of $M$ of genus less than $\frac{1}{2} d(\Sigma)$ is either a stabilization of $\Sigma$ or a stabilization of a boundary stabilization of $\Sigma$.  Every boundary stabilization of $\Sigma$ has genus at least $k + b$ so such a $\Sigma''$ must be a stabilization of $\Sigma$.  Thus if $\Sigma'$ is stabilized then it is a stabilization of $\Sigma$.

Since $(\Sigma', H'^-, H'^+)$ spans $(\Sigma, H^-, H^+)$ negatively, Theorem~\ref{mainthm2} implies that any common stabilization of $\Sigma$ and $\Sigma'$ has genus strictly greater than $k + b$.  If $\Sigma'$ were a stabilization of $\Sigma$ then it would be a common stabilization so $\Sigma'$ is not a stabilization of $\Sigma$.  This contradiction implies that $\Sigma'$ is irreducible.
\end{proof}

\bibliographystyle{amsplain}
\bibliography{stabs}

\end{document}